\numberwithin{equation}{section}
\newtheorem{claim}{\bf \t}[part]
\newtheorem{theorem}{Theorem}[section]
\newtheorem{lemma}{Lemma}[section]
\newtheorem{proposition}[theorem]{Proposition}
\newtheorem{remark}{Remark}[section]
\def\t{\theta}
\newcommand \R{\mathbb{R}}
\def\bt{\begin{theorem}}
	\def\et{\end{theorem}}
\def\ba{\begin{array}}
	\def\ea{\end{array}}
\def\bl{\begin{lemma}}
	\def\el{\end{lemma}}
\def\bes{\begin{eqnarray}}
\def\ees{\end{eqnarray}}
\newcommand{\Curl}{\mathrm{curl}}
\begin{document}
	
	\title[Subsonic and subsonic-sonic flows with general conservative forces]
	{On subsonic and subsonic-sonic flows with general conservatives force in exterior domains}
	
	\author{Xumin Gu}
	\address{X. Gu, Department of Mathematics, Shanghai University of Finance and Economics,
		Shanghai, 200433, P. R. China; The Institute of Mathematical Sciences, The Chinese University of Hong Kong,
		Shatin, N.T., Hong Kong}
	\email{xmgu@ims.cuhk.edu.hk; xumingu11@fudan.edu.cn}

	\author{Tian-Yi Wang$^*$}\thanks{$*$ Corresponding author.}
	\address{T.-Y. Wang, Department of Mathematics, School of Science, Wuhan University of Technology,
		Wuhan, Hubei 430070, P. R. China;
		The Institute of Mathematical Sciences, The Chinese University of Hong Kong,
		Shatin, N.T., Hong Kong}
	\email{tianyiwang@whut.edu.cn; wangtianyi@amss.ac.cn}
\begin{abstract}
In this paper, we study irrotational subsonic and subsonic-sonic flows with general conservative forces in the exterior domains. The conservative forces indicate the new Bernoulli law naturally. For the subsonic case, we introduce a modified cut-off system depending on the conservative forces which needs the varied Bers skill, and construct the solution by the new variational formula. Moreover, comparing with previous results, our result extends the pressure-density relation to the general case. Afterwards we obtain the subsonic-sonic limit solution by taking the extract subsonic solutions as the approximate sequences.
\end{abstract}

\keywords{Steady flow, homentropic, irrotation, subsonic flow, subsonic-sonic limit}
\subjclass[2010]{
	35Q31; 
	35L65; 
	76N10; 
	76G25; 
	35D30
}

\maketitle

\section{Introduction}

Here we are considering the steady homentropic Euler equations with extract forces, which are written as:
\begin{eqnarray}\label{OrE}
\begin{cases}
	\mbox{div}(\rho u)=0,\\
	\mbox{div}(\rho u\otimes u)+\nabla p=\rho F,
\end{cases}
\end{eqnarray}
where $x=(x_1, \cdots, x_n)\in \R^n, n\ge 3$. $u=(u_1,\cdots,u_n)\in \R^n$ is the fluid velocity, while $\rho$, $p$, and $F$ represent the density, pressure, and extra forces respectively. For the hometropic flow, the pressure $p$ is a function of the density $\rho$, which is written as:
$
p=p(\rho).
$
As usual, we require
\begin{equation}\label{pcondition}
p'(\rho)>0,  \quad 2p'(\rho) + \rho p''(\rho) > 0 \qquad \mbox{for $\rho>0$},
\end{equation}
which include the $\gamma$-laws flow with  $p=\kappa \rho^\gamma$, for $\gamma>1$ and $\kappa>0$,
and the isothermal flows with $p=\kappa \rho$; see \cite{Courant-Friedrichs}.
The Mach number is a non-dimensional ratio of the fluid velocity to local sound
speed,
$$M=\frac{|u|}{c},$$
where
$$c=\sqrt{p'(\rho)},$$
is the local sound speed and
$$
|u|:=\Big(\sum_{i=1}^n u_i^2\Big)^{1/2}
$$
is the flow speed. The flow is subsonic when $M<1$, while the $M=1$ means the flow is locally sonic. Otherwise, $M>1$ implies flow is supersonic.

Through this paper, we consider that the extra force $F$ is conservative. 
This is reasonable since this type of forces is quite natural and important in the reality. For instance, by Newton's law of universal gravitation, the gravity field is a conservative field. Another usual example is the electric field.

Due the infinity state and the structure of the extra forces, we assume the flow is irrotational, which means the vorticity of the flow velocity
$$
\mbox{curl} u =0.
$$


One of classical problems on the steady compressible flows is the exterior domain problem.
 Let $\Gamma$ be one closed $n-1$
 dimensional hyper surfaces in $n$-dimensional Euclidean space $\R^n$ which is filled with a compressible fluid in the exterior region $\Omega$. We shall always assume $\Gamma\in
 C^{2, \alpha}$ and $\overline{\Omega}$ does not contain origin.  At the $\Gamma$ boundary, the flow satisfies the slip condition:
\begin{equation*}
u \cdot \nu =0 \qquad \mbox{on} ~\Gamma,
\end{equation*}
where  $\nu$ is the unit outward normal to the region $\Omega$. For the infinity state of the flow, after the normalization and Galilean transformation, one can assume $\lim_{|x|\rightarrow\infty} \rho(x)=1$, and $\lim_{|x|\rightarrow\infty} u=(q_\infty, 0, \cdots, 0)$. The problem is also called as airfoil problem when $F\equiv0$.

 The study of the subsonic flows is important due to its physical background and has a long research history. The first theoretical result was obtained by Frankl and Keldysh in \cite{Frankl}. They studied the subsonic flows around a two dimensional airfoil and proved the existence and the uniqueness for small data by the method of successive approximations. Later on, Bers \cite{Bers1, Bers2} proved the existence of subsonic flows with arbitrarily high local subsonic speed for the Chaplygin gas (minimal surface). By a variational method, Shiffman \cite{Shiffman1, Shiffman2} proved that, if the infinite free stream flow speed $q_{\infty}$ is less than some critical speed, there exists a unique subsonic potential flow around a given profile with finite energy. Shortly afterwards, Bers \cite{Bers3} improved  Shiffman's uniqueness results. Finn and Gilbarg \cite{Finn1} proved the uniqueness of the two dimensional potential subsonic flow around a bounded obstacle with given circulation and velocity at infinity. All the above results are related to two dimensional problems. For three (or higher) dimensional cases, Finn and Gilbarg \cite{Gilbarg1} proved the existence, uniqueness and the asymptotic behavior with implicit restrictions on Mach number $M$. Payne and Weinberger \cite{Payne} improved their results soon after. Later, Dong \cite{Dong1} extended the results of Finn and Gilbarg
 \cite{Gilbarg1} to any Mach number $M < 1$ and to arbitrary dimensions. Proceeding further,
 in \cite{Dong2}, Dong and Ou extended the results of Shiffman to higher dimensions by
 the direct method of calculus of variations and the standard Hilbert space method for the $\gamma$-law case and isothermal case. The respective incompressible case is considered in Ou \cite{ou1} and Ou-Lu \cite{ou2}.  For the rotation flow, the symmetric body case is considered recently in \cite{Chen-Du-Xie-Xin}. The another case of subsonic flow is the infinitely long nozzle case, the reader can refer to  \cite{Du-Yan-Xin, Liu-Yian, Xin1, Xin2} for results and details.

On the other hand, the existence of subsonic-sonic flows could be generated by the subsonic-sonic limit from the existed exact subsonic solutions. The first compactness framework on sonic-subsonic irrotational flows in two dimension was due to  \cite{Chen6} and \cite{Xin1} independently. The general compactness framework was introduced in \cite{Chen6} by Chen, Dafermos, Slemrod and Wang. While for the infinitely long nozzle problem,  Xie and Xin \cite{Xin1} investigated the subsonic-sonic limit of the two-dimensional irrotational flows. Later, in \cite{Xin2}, they extended the result to the three-dimensional axisymmetric flow through an axisymmetric nozzle. The compactness framework in the general multidimensional irrotational case was established in \cite{Huang-Wang-Wang}. The non-homentropic and rotation flows case is concluded by Chen, Huang, and Wang in \cite{Chen-Huang-Wang}.

We will discuss both the subsonic case and subsonic-sonic case in this paper. The general conservative forces lead a new Bernoulli law, which can not be handled by the existed process directly. For the subsonic case, we need to introduce the modified cut-off system and variation formula combining with the varied  Bers skill. Also, comparing with the previous results, we extend the pressure-density relation to the general cases, which includes  $\gamma$-laws flows and isothermal flows, basing on the delicate analysis on the phase plane. For the subsonic-sonic case, taking the extract subsonic as the approximate sequence, one can obtain the subsonic-sonic limit solution by employing the convergence theorem in \cite{Chen-Huang-Wang}.

The rest of this paper is organized as follows.
In Section 2, we establish the formulation of the problem and state the main theorem. We clarify the mathematical setting and introduce the cut-off by modifying density function in Section 3. For the modified problem, the variation formulation is used to constructing the solution in Section 4. In Section 5, the higher regularity of the modified flows is proved. Finally, in Section 6, we complete the proof by the varied Bers skill and subsonic-sonic compactness.

\section{The formulation of the problem and the main result}
Due to $F$ is conservative force, we could introduce the potential function $\psi$ such that
\begin{equation*}
F_i=\partial_{i} \psi
\end{equation*}
for $i=1,\cdots, n$. From equation $(\ref{OrE})$, we can have. 
$$
\sum_{j=1}^n \rho u_j\partial_i u_j + \partial_ip(\rho) = \rho \partial_i\psi
$$
due to the irrotational condition $\Curl u=0$.
Dividing by $\rho$ and defining  $h(\rho)$ as
$$
h(\rho) = \int_1^\rho\frac{p'(\tau)}{\tau} d \tau,
$$
we get
$$
\nabla\left[\frac{1}{2}|u|^2 + h(\rho) -\psi\right]=0.
$$
Then the Bernoulli law comes to
\begin{equation}\label{Brelation}
\frac{1}{2}|u|^2 + h(\rho) =\psi,
\end{equation}
with modifying a constant.
Without loss of generality, we assume $\psi$ is bounded and
\begin{equation}\label{psicondition}
\lim_{\rho\rightarrow+\infty}h(\rho)>\psi>\lim_{\rho\rightarrow0^+}H(\rho),\end{equation}
where
$$
H(\rho)=\left(\frac{p'(\rho)}{2}+h(\rho)\right).
$$
From \eqref{pcondition} and \eqref{psicondition}, it is easy to see $h(\rho)$ has the respective inverse function $h^{-1}$, which leads  the presentation of density:
\begin{equation}\label{equ-rho}
\rho=h^{-1} \left(\psi-\frac{|u|^2}{2}\right)
\end{equation}
which is equivalence to \eqref{Brelation}. Then, Mach number can be regarded as the function of $u$ and $\psi$, which is written as $M(u;\psi)$.

Within this paper, we will consider the following problem:

\textbf{Problem 1 ($q_\infty$)}:  Find functions
$u=(u_1,\cdots,u_n)$ satisfy
\begin{eqnarray}\label{p1equation}
\begin{cases}
\mbox{div}\left(\rho u\right)=0,\\
\mbox{curl} u =0,
\end{cases}
\end{eqnarray}
 with the
Bernoulli law \eqref{Brelation} in $\Omega$.
And the slip boundary
condition
\begin{equation}\label{3.1}
(\rho u)\cdot \nu=0\ \ \mbox{on}\ \ \Gamma,
\end{equation}
where $\nu$ denotes the unit inward normal of domain
$\Omega$, and the limit
\begin{equation*}
\lim_{|x|\rightarrow\infty} u(x)=(q_\infty, 0, \cdots, 0)
\end{equation*}
exists and is finite.

\begin{remark}
If the flow without vacuum, which means $\inf_{x\in\Gamma}\rho(x)>0$, \eqref{3.1} can be written as
\begin{equation}\label{3.1+}
u \cdot\nu=0\ \ \mbox{on}\ \ \Gamma.
\end{equation}
\end{remark}

Our main result is the following theorem:
\begin{theorem}\label{mainthm}
For the given $\psi$ satisfies \eqref{psicondition} and
\begin{equation}\label{psicondition1}
\partial_1\psi\in  L^{\frac{2n}{n+2}}(\Omega)~ ~ \mbox{ and } ~ ~ |x|^\beta\nabla\psi\in L^q(\Omega) ~ \mbox{ for } ~ q>n , ~~ \beta>1-\frac{n}{q}.
\end{equation}
(1) There exists a positive number $\hat{q}$, if $q_\infty<\hat{q}$, then there exists  an unique solution $u \in C^{1, \alpha_0}$ for some $0<\alpha_0< 1$ of \textbf{Problem 1 $(q_\infty)$}, and Mach number $M(u; \psi)<1$.

(2) Let $q_{\infty}^\varepsilon\rightarrow \hat{q}$ as $\varepsilon\rightarrow0$, with
$q_\infty^{\varepsilon}<\hat{q}$. And
$u^{\varepsilon}=(u_1^{\varepsilon},\cdots,u_n^{\varepsilon})$ be
the corresponding solutions to  \textbf{Problem 1 $(q_\infty^\varepsilon)$}. Then, as
$q_{\infty}^\varepsilon\rightarrow \hat{q}$, the solution
sequence $u^{\varepsilon}(x)$  possess a subsequence (still denoted
by) converge a.e. in $\Omega$ to $\bar{u}(x)=(\bar{u}_1, \cdots, \bar{u}_n)(x)$ which is a weak solution of \textbf{Problem 1 ($\hat{q}$)}.
Furthermore, $\bar{u}$ and $\bar{\rho}$, which is defined through \eqref{Brelation}, also satisfies
$(\ref{OrE})_2$ in the sense of
distributions and the
boundary condition \eqref{3.1} as the normal trace of the
divergence-measure field on the boundary (see
\cite{Chen7}).
\end{theorem}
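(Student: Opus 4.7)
Since $\mathrm{curl}\,u=0$ on the (simply connected) exterior domain $\Omega$, I write $u=\nabla\varphi$; then \eqref{equ-rho} reduces \eqref{p1equation}--\eqref{3.1} to the single quasilinear equation
\begin{equation*}
\mathrm{div}\bigl(\rho(|\nabla\varphi|^2,\psi)\,\nabla\varphi\bigr)=0 \text{ in }\Omega,\quad \partial_\nu\varphi=0 \text{ on }\Gamma,\quad \nabla\varphi\to q_\infty e_1 \text{ at infinity},
\end{equation*}
with $\rho=h^{-1}(\psi-|\nabla\varphi|^2/2)$. This equation is elliptic precisely where $M(u;\psi)<1$. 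Following the Shiffman-Bers-Dong-Ou philosophy, I replace $\rho(q^2,\psi)$ by a $\psi$-dependent modified density $\tilde\rho$ that agrees with $\rho$ on a subsonic range and is smoothly extended so that the modified operator is uniformly elliptic everywhere. I then (i) solve the modified problem variationally, (ii) upgrade the regularity by Schauder theory, and (iii) use a variant of Bers' maximum principle adapted to the Bernoulli law \eqref{Brelation} to show that, for $q_\infty$ small enough, the cut-off is inactive on the solution, so a genuine subsonic solution is obtained.

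\textbf{Variational existence and regularity.} Picking a primitive $F(q^2,\psi)$ with $2\partial_{q^2}F=\tilde\rho$ and writing $\varphi=q_\infty x_1+\phi$, the modified equation is the Euler-Lagrange equation of
\begin{equation*}
\mathcal{E}(\phi)=\int_\Omega\Bigl[F(|q_\infty e_1+\nabla\phi|^2,\psi)-F(q_\infty^2,\psi)\Bigr]dx
\end{equation*}
taken over a weighted Sobolev completion of perturbations that decay at infinity and carry the boundary flux on $\Gamma$. The design of $\tilde\rho$ forces $F(\cdot,\psi)$ to be strictly convex, so the direct method yields a unique minimizer. The hypotheses \eqref{psicondition1} are exactly what make $\mathcal{E}$ finite and weakly lower semicontinuous: $\partial_1\psi\in L^{2n/(n+2)}$ is dual to the $L^{2n/(n-2)}$ Hardy-Sobolev norm available for $\phi$ on $\R^n$ with $n\ge 3$ (controlling the leading cross term $\sim\rho_\infty q_\infty\partial_1\phi$ after integrating by parts), while $|x|^\beta\nabla\psi\in L^q$ with $q>n$, $\beta>1-n/q$ furnishes the pointwise decay needed to run Schauder estimates. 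Combined with the $C^{2,\alpha}$ regularity of $\Gamma$, these give $\varphi\in C^{1,\alpha_0}_{\mathrm{loc}}(\overline\Omega)$.

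\textbf{Varied Bers argument (main obstacle).} The heart of the proof, and what I expect to be the main technical difficulty, is the a priori estimate showing that for $q_\infty<\hat q$ the modified solution satisfies $\sup_\Omega M(u;\psi)<1$, so the cut-off is never activated and the solution solves the original problem. In the force-free case one derives an elliptic differential inequality for $|u|^2$ and closes by the maximum principle plus the slip condition. With the $\psi$-dependent Bernoulli law \eqref{Brelation} the correct quantity satisfying a comparison principle is not $|u|^2$ alone but a $\psi$-adjusted combination reading off $h(\rho)=\psi-|u|^2/2$, from which $M$ is monotonically recovered; deriving this inequality, and handling the boundary terms on $\Gamma$ via the slip condition and Hopf's lemma in the spirit of Dong \cite{Dong1,Dong2}, is the new ingredient. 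Extending the conclusion from the $\gamma$-law and isothermal cases to any pressure law satisfying \eqref{pcondition} is done, as indicated in the introduction, through a phase-plane analysis of the maps $q^2\mapsto\rho(q^2,\psi)$ and $q^2\mapsto M^2(q^2,\psi)$, which must be shown to preserve the structural monotonicity/convexity used above. Uniqueness then follows from strict convexity of $\mathcal{E}$ restricted to the subsonic set, where the cut-off is transparent and so two subsonic solutions must both minimize the same strictly convex functional.

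\textbf{Subsonic-sonic limit.} For part (2), the sequence $\{u^\varepsilon\}$ is uniformly bounded in $L^\infty$ by the $\psi$-sonic speed via \eqref{Brelation} and satisfies $\mathrm{div}(\rho^\varepsilon u^\varepsilon)=0$ together with $\mathrm{curl}\,u^\varepsilon=0$; the div-curl compensated-compactness framework of \cite{Huang-Wang-Wang,Chen-Huang-Wang} then extracts an a.e. convergent subsequence whose limit $\bar u$ satisfies the continuity and irrotationality equations in the distributional sense, with $\bar\rho$ recovered from the Bernoulli relation. The momentum equation passes to the limit because the source $\rho^\varepsilon\nabla\psi$ is controlled in $L^{2n/(n+2)}\cap L^q_{\mathrm{loc}}$ by \eqref{psicondition1} and the $L^\infty$ bound, providing the $W^{-1,p}_{\mathrm{loc}}$ compactness of the momentum flux required to close the argument. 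Finally the slip condition \eqref{3.1} is recovered as the normal trace of the divergence-measure field $\bar\rho\bar u$ on $\Gamma$ in the sense of \cite{Chen7}.
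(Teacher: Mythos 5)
Your overall architecture (potential reduction, $\psi$-dependent cut-off of the density, variational solution of the modified problem, regularity, removal of the cut-off, compensated compactness for the sonic limit) coincides with the paper's. The genuine gap is in the step you yourself flag as the heart of the matter: you propose to remove the cut-off by deriving an elliptic differential inequality for a ``$\psi$-adjusted speed'' and invoking the maximum principle and Hopf's lemma, but you never exhibit this quantity or the inequality, and it is not clear one exists for a general pressure law satisfying \eqref{pcondition} and a general bounded potential $\psi$ --- this is precisely the difficulty the paper avoids. The paper's ``varied Bers skill'' is instead a continuity method: it defines $\mathcal{M}(q_\infty,\theta)=\max_{x\in\Omega}\bigl(|\nabla\phi(x;q_\infty,\theta)|/q_{cr}(\psi)(x)\bigr)$, proves via uniform convergence of minimizers and Arzel\`a--Ascoli that $\mathcal{M}$ is continuous in $q_\infty$, takes a decreasing sequence of cut-off parameters $\theta_i\downarrow 0$ with maximal intervals $[0,q_\infty^i)$ on which $\mathcal{M}<1-2\theta_i$ (so that the modified solution is automatically a solution of the original problem there), and sets $\hat q=\lim_i q_\infty^i$. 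This construction also gives $M\to 1$ as $q_\infty\to\hat q$, which is what makes part (2) a genuine subsonic--sonic limit rather than a purely subsonic one. Even granting a maximum principle for the speed, you would at best locate its maximum on $\Gamma$; you would still need the continuity-in-$q_\infty$ argument to produce the critical value $\hat q$ and to conclude $M<1$ for every $q_\infty<\hat q$. As written, the de-cut-off step is a placeholder, not a proof.

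A smaller but real issue: the functional $\mathcal{E}(\phi)=\int_\Omega\bigl[F(|q_\infty e_1+\nabla\phi|^2,\psi)-F(q_\infty^2,\psi)\bigr]dx$ is not finite on the energy space $\mathcal{V}$, since its integrand contains the linear term $2F_v(q_\infty^2,\psi)\,q_\infty\,\partial_1\phi$, which is only in $L^2$. The paper's functional \eqref{equ-I} subtracts the full linearization and compensates with an explicit surface integral over $\Gamma$ (controlled by Hardy's inequality) and a body integral involving $G_{vw}((q_\infty)^2,\psi)\,q_\infty\,\partial_1\psi\,\varphi$; the latter is exactly where the hypothesis $\partial_1\psi\in L^{\frac{2n}{n+2}}(\Omega)$ enters, via duality with the Sobolev embedding \eqref{pineq}. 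You correctly identify this duality but do not carry out the integration by parts that produces these compensating terms, and the $\psi$-dependence of $G_v$ is what makes them nontrivial here. Your treatment of part (2) matches the paper's use of the compactness theorem of \cite{Chen-Huang-Wang} and is fine.
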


\begin{remark}
It is easy to check for $n=3$ the gravity generated by the solid domain $\Omega^c$ :
$$
\psi=\int_{\Omega^c}\frac{\rho_s(y)}{|x-y|} d y
$$
satisfies the conditions \eqref{psicondition} and \eqref{psicondition1} on $\psi$, where $\rho_s\in L^1(\Omega^c)$ present the density distribution of  $\Omega^c$ with the finite mass. It also can be applied to the electric field.
\end{remark}

\begin{remark}
	It is noticeable that when $\beta>\frac{n}{2}+1-\frac{n}{q}$ with $q>n$, $|x|^\beta\nabla\psi\in L^q(\Omega)$ includes the both sub-conditions in \eqref{psicondition1} by the standard H\"{o}lder's inequality.
\end{remark}

\begin{remark}
In part (1) of the Theorem \ref{mainthm},  the regularity of  $u$ are limited by $\psi$. One can improve the  regularity of $u$ and $\rho$ by imposing the further smooth condition on $\psi$.
\end{remark}

\section{Mathematical setting and Modification of the density function}
In this section, we will transfer \textbf{Problem 1 ($q_\infty$)} to a second order partial differential problem, and introduce a respective subsonic cut-off.

For the irrotation equation $(\ref{p1equation})_2$, we could introduce the flow potential $\phi$, which satisfies:
\begin{equation*}
u=\nabla \phi.
\end{equation*}
Then, the slip condition \eqref{3.1+} on the boundary $\Gamma$ comes to
\begin{equation*}
\quad \frac{\partial\phi}{\partial \nu}=0.
\end{equation*}
We also give the infinity condition that
\begin{equation*}
\lim_{|x|\rightarrow\infty}\nabla\phi(x) = (q_\infty, 0, \cdots, 0).
\end{equation*}

Then, the presentation of density \eqref{equ-rho} comes to:
\begin{equation*}
\rho(\left|\nabla\phi\right|^2-2\psi):=h^{-1} \left(\psi-\frac{|u|^2}{2}\right).
\end{equation*}
Then, we come to the second order equation form $(\ref{p1equation})_1$:
$$
\mbox{div}\left(\rho(\left|\nabla\phi\right|^2-2\psi)\nabla\phi\right)=0.
$$
Then, \textbf{Problem 1 ($q_\infty$)} comes to:

\textbf{Problem 2 ($q_\infty$)}:  Find $\phi(x)$ such that
\begin{eqnarray}\label{equ-bc-ic}
\begin{cases}
\mbox{div}\left(\rho(\left|\nabla\phi\right|^2-2\psi)\nabla\phi\right)=0, \quad & \mbox{in} ~\Omega,\\
\frac{\partial \phi}{\partial \nu} =0, \quad & \mbox{on} ~\Gamma,\\
\lim_{|x|\rightarrow\infty}\nabla\phi(x) = (q_\infty, 0, \cdots, 0).
\end{cases}
\end{eqnarray}

From the direct calculating, $(\ref{equ-bc-ic})_1$ comes to:
$$
\sum_{i=1}^n \partial_i\left(\rho \left(|\nabla\phi|^2-2\psi\right) \partial_i \phi\right)
= \sum_{i, j=1}^n a_{ij} \partial_{ij} \phi+\sum_{i=1}^n b_i \partial_i \phi=0
.
$$
where
$$
a_{ij} = \rho \delta_{i,j}- \rho'(\left|\nabla\phi\right|^2-2\psi)\partial_i\phi \partial_j \phi = \rho\left(\delta_{i,j}-\frac{\partial_i\phi \partial_j \phi}{c^2}   \right),
$$
and
$$
b_i=\frac{\rho \partial_i\psi}{c^2}.
$$
Then, for  $\xi\in\mathbb{R}^n$,
$$
\rho(1-M^2)|\xi|^2\leq a_{ij}\xi_i\xi_j\leq \rho |\xi|^2.
$$
Then, we could see $(\ref{equ-bc-ic})_1$ is elliptic if and only if the flow is subsonic, and it will degenerate in the subsonic-sonic case.  It is noticeable that without \textit{a prior} estimate on $|\nabla\phi|$, the potential equation  $(\ref{equ-bc-ic})_1$ is not guaranteed to be uniform ellipticity. Therefore, we need to introduce the following cut-off.

From \eqref{pcondition}, $H(\rho)$ is an increase function respect to $\rho$. For fixed $\psi$, $$q_{cr}(\psi):=\left(2\psi-2h\left(H^{-1}(\psi)\right)\right)^{\frac{1}{2}}$$
 is the critical speed. From the direct calculation, one can show the flow is subsonic ($M<1$) if and only if $|u|<q_{cr}(\psi)$.

Now, we introduce a modified problem of \textbf{Problem 2 ($q_\infty$)}, which is uniformly elliptic by
presenting a way to modify the density $\rho$. For any small $\theta>0$, we  define $\tilde{\rho}$ as
\begin{equation*}
\tilde{\rho}(|\nabla\phi|^2, \psi):=\begin{cases}
\rho\left(|\nabla\phi|^2-2\psi\right), \quad &
\mbox{if} ~|\nabla\phi|\leq(1-2\theta)q_{cr}(\psi),\\
\mbox{monotone ~smooth ~connection},\quad &
\mbox{otherwise},\\
\sup_{x\in\Omega}\left\{\rho\left((1-\theta)^2q^2_{cr}(\psi(x))-2\psi(x)\right)\right\},\quad &
\mbox{if} ~|\nabla\phi|\geq(1-\theta)q_{cr}(\psi).
\end{cases}
\end{equation*}
And, we denote $\tilde{\rho}_v(v, w):=\frac{\partial}{\partial v}\tilde{\rho}(v, w)$, and $\tilde{\rho}_w(v, w):=\frac{\partial}{\partial w}\tilde{\rho}(v, w)$.

Then, \textbf{Problem 3 ($q_\infty$)} is defined as:
 Find $\phi(x)$ such that:
\begin{eqnarray}\label{equ-modify}
\begin{cases}
\mbox{div}\left(\tilde{\rho}\left(|\nabla\phi|^2, \psi\right) \nabla \phi\right)=0, \quad & \mbox{in} ~\Omega,\\
\frac{\partial\phi}{\partial \nu}=0, \quad & \mbox{on} ~\Gamma,\\
\lim_{|x|\rightarrow\infty}\nabla\phi(x) = (q_\infty, 0, \cdots, 0).
\end{cases}
\end{eqnarray}
After the similar calculation with  \textbf{Problem 2 ($q_\infty$)}, $\psi$ satisfies:
\begin{equation}\label{equ-modify1}
\sum_{i, j=1}^n \tilde{a}_{ij} \partial_{ij} \phi+\sum_{i=1}^n \tilde{b}_i \partial_i\phi=0
.
\end{equation}
where
$$
\tilde{a}_{ij} = \tilde{\rho} \delta_{i,j}- 2\tilde{\rho}_v\partial_i\phi \partial_j \phi,
$$
and
$$
\tilde{b}_i= \tilde{\rho}_w \partial_i\psi.
$$
And, for $\xi\in\mathbb{R}^n$,
\begin{equation} \label{cutoffellpitic}
 C|\xi|^2\leq \tilde{a}_{ij}\xi_i\xi_j\leq C^{-1} |\xi|^2,
\end{equation}
and
$$
|\tilde{b}_i \partial_i\phi|\leq C|\partial_{i}\psi|,
$$
where $C$ is a positive number dependent on $\theta$ and $\psi$.

\section{A variation Formulation}
In this section, we solve \textbf{Problem 2 ($q_\infty$)} by a variational method. To do that,  we need a suitable Hilbert space. Due to \cite{Dong1,Dong2,ou1,ou2}, the suitable function space is from the following:

\begin{theorem}
	Define a function set composed of all the function on $\Omega$ which are really the restrictions of some $C_{0}^{\infty}(\mathbb{R}^n)$ function on $\Omega$:
	\begin{equation*}
		\mathcal{V}_0 = \left\{\varphi(x),~ x\in\Omega ~ |~  \varphi(x)=\tilde{\varphi}(x) \mbox{~for ~some} ~\tilde{\varphi}\in C_0^{\infty} (\R^n)
		\right\}.
	\end{equation*}
	Then under the norm
	\begin{equation*}
		\|\varphi\|_{\mathcal{V}} = \left(\int_\Omega |\nabla \varphi|^2dx\right)^{\frac{1}{2}},
	\end{equation*}
	$\mathcal{V}_0$ expands a Hilbert space $\mathcal{V}$ if $n\geq 3$.
\end{theorem}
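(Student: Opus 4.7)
The plan is to show that $\langle\varphi,\chi\rangle_{\mathcal{V}} := \int_\Omega \nabla\varphi\cdot\nabla\chi\,dx$ is a genuine inner product on $\mathcal{V}_0$, and then to invoke the standard completion of a pre-Hilbert space. The role of $n\ge 3$ is to guarantee, via Sobolev embedding, that the abstract completion can be realised as a concrete space of functions on $\Omega$, rather than merely of functions modulo additive constants.

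First I would verify the algebraic properties. Bilinearity and symmetry of $\langle\cdot,\cdot\rangle_{\mathcal{V}}$ are immediate. For positive definiteness, suppose $\varphi = \tilde\varphi|_\Omega$ for some $\tilde\varphi\in C_0^\infty(\mathbb{R}^n)$ with $\|\varphi\|_{\mathcal{V}}=0$. Then $\nabla\tilde\varphi = 0$ almost everywhere on $\Omega$, so $\tilde\varphi$ is locally constant on $\Omega$. Since $\Gamma$ is a single closed hypersurface bounding the compact set $\Omega^c$, the exterior $\Omega$ is connected; because $\tilde\varphi$ has compact support, it vanishes near infinity; hence $\tilde\varphi\equiv 0$ on $\Omega$, i.e.\ $\varphi=0$ as an element of $\mathcal{V}_0$. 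Therefore $(\mathcal{V}_0,\langle\cdot,\cdot\rangle_{\mathcal{V}})$ is an inner product space.

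Second, I would take the metric completion of $\mathcal{V}_0$, which by general functional analysis is automatically a Hilbert space $\mathcal{V}$. To identify $\mathcal{V}$ with a concrete space of functions I would appeal to the Sobolev inequality on $\mathbb{R}^n$:
\begin{equation*}
\|\tilde\varphi\|_{L^{2n/(n-2)}(\mathbb{R}^n)} \le C_n \|\nabla\tilde\varphi\|_{L^2(\mathbb{R}^n)} \qquad \forall\,\tilde\varphi\in C_0^\infty(\mathbb{R}^n),
\end{equation*}
available precisely for $n\ge 3$. Combined with a Calder\'on-type extension across the $C^{2,\alpha}$ boundary $\Gamma$ of the bounded set $\Omega^c$, this yields
\begin{equation*}
\|\varphi\|_{L^{2n/(n-2)}(\Omega)} \le C\,\|\varphi\|_{\mathcal{V}}
\end{equation*}
uniformly for $\varphi\in\mathcal{V}_0$. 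Any Cauchy sequence $\{\varphi_k\}\subset\mathcal{V}_0$ is therefore Cauchy in $L^{2n/(n-2)}(\Omega)$, so it converges to an honest function $\varphi\in L^{2n/(n-2)}(\Omega)$ whose distributional gradient lies in $L^2(\Omega)$. Conversely, approximating such a function by smooth compactly supported functions shows $\mathcal{V}$ may be identified with the space of all such functions.

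The main obstacle is not the abstract completion, which is routine, but the passage to a concrete function space: in dimension $n=2$ the analogous critical Sobolev embedding fails, so the Dirichlet norm cannot distinguish a function from its additive constants and the completion would only make sense modulo constants. This is exactly the dimensional restriction stated in the theorem, and ensures that $\mathcal{V}$ is a suitable ambient Hilbert space for the variational formulation carried out in the next section.
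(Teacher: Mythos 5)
The paper does not actually prove this statement: it is quoted from the references \cite{Dong1,Dong2,ou1,ou2} (in particular Lu--Ou \cite{ou2}), so there is no in-paper argument to compare against. Your outline is the standard one and is correct in structure: the Dirichlet form is an inner product on $\mathcal{V}_0$ (positive definiteness via connectedness and unboundedness of the exterior domain plus compact support), the abstract completion is a Hilbert space, and the critical Sobolev embedding --- available exactly for $n\ge 3$ --- is what realises the completion as a space of genuine functions rather than functions modulo constants.

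The one place you are too quick is the passage from the whole-space Sobolev inequality to the estimate $\|\varphi\|_{L^{2n/(n-2)}(\Omega)}\le C\|\nabla\varphi\|_{L^2(\Omega)}$, which is precisely the second quoted theorem, inequality \eqref{pineq}, and is the genuinely nontrivial point (it is the main content of \cite{ou2}). A Calder\'on-type extension operator bounds the full $H^1(\Omega\cap B_R)$ norm of the extension, including $\|\varphi\|_{L^2}$ near the obstacle, so it cannot by itself reduce matters to a bound by $\|\nabla\varphi\|_{L^2(\Omega)}$ alone; constants on the bounded collar are the obstruction. The usual repair is to take a cutoff $\chi$ vanishing on a neighbourhood of $\Omega^c$ and equal to $1$ outside a larger ball, apply the Sobolev inequality to $\chi\tilde\varphi\in C_0^\infty(\R^n)$, and then control the commutator term $\|\tilde\varphi\,\nabla\chi\|_{L^2}$, supported in an annulus contained in $\Omega$, by $\|\varphi\|_{L^{2n/(n-2)}}$ of that annulus via H\"older and absorb it (or use a Poincar\'e inequality on the annulus exploiting that $\varphi$ is controlled further out). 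You should either carry out this step or cite it; as written, the sentence beginning ``Combined with a Calder\'on-type extension'' asserts the key inequality rather than proving it. With that step supplied, your argument is complete; the final density remark (identifying $\mathcal{V}$ with all $L^{2n/(n-2)}$ functions with $L^2$ gradient) is not needed for the statement and could be dropped.
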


\begin{theorem}
There exists a constant $C_{n}(\Omega)$, such that for $\varphi\in\mathcal{V}$,
\begin{equation}\label{pineq}
\left(\int_{\Omega}|\varphi|^{\frac{2n}{n-2}}dx\right) ^{\frac{n-2}{2n}}\leq C_{n}(\Omega) \left(\int_{\Omega}|\nabla \varphi|^{2}dx\right)^{\frac{1}{2}}.
\end{equation}
\end{theorem}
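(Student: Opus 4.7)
By the density of $\mathcal{V}_0$ in $\mathcal{V}$ and the continuity of both sides of \eqref{pineq} in the norm of $\mathcal{V}$, it suffices to establish the inequality for $\varphi\in\mathcal{V}_0$ and pass to the limit. Fix such a $\varphi$ and write $\varphi=\tilde\varphi|_\Omega$ for some $\tilde\varphi\in C_0^\infty(\R^n)$. The plan is to construct a compactly supported extension $\bar\varphi$ of $\varphi$ to $\R^n$ satisfying the gradient bound
\begin{equation*}
\|\nabla\bar\varphi\|_{L^2(\R^n)}\le C(\Omega)\|\nabla\varphi\|_{L^2(\Omega)},
\end{equation*}
and then to invoke the classical Gagliardo--Nirenberg--Sobolev inequality on $\R^n$ for $\bar\varphi$, together with the trivial inclusion $\|\varphi\|_{L^{2n/(n-2)}(\Omega)}\le\|\bar\varphi\|_{L^{2n/(n-2)}(\R^n)}$.

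The extension $\bar\varphi$ would be built using the $C^{2,\alpha}$ regularity of $\Gamma$: cover a tubular neighborhood of $\Gamma$ by finitely many coordinate charts flattening the boundary, define the local extension in each chart by reflection across the flat boundary, and assemble the pieces through a partition of unity, transitioning smoothly to zero in the bounded interior $\Omega^c$. In each chart, the reflection gives $\|\nabla\bar\varphi\|_{L^2(\Omega^c\cap U_i)}\le C\|\nabla\varphi\|_{L^2(\Omega\cap U_i)}$, while derivatives falling on the partition of unity or on the interior cutoff produce lower-order contributions controlled by $\|\varphi\|_{L^2(\Omega\cap U)}$ on a bounded neighborhood $U$ of $\Gamma$.

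To absorb these lower-order terms into the gradient norm, I would use that $\bar\Omega$ avoids the origin and establish the Hardy-type inequality on $\Omega$,
\begin{equation*}
\int_\Omega\frac{|\varphi|^2}{|x|^2}\,dx\le C(\Omega)\int_\Omega|\nabla\varphi|^2\,dx,
\end{equation*}
by integrating by parts with $\mathrm{div}(x/|x|^2)=(n-2)/|x|^2$ on $\Omega$, using a weighted Cauchy--Schwarz on the cross term and the trace theorem together with the pointwise bound $|x\cdot\nu|/|x|^2\le 1/|x|$ on $\Gamma$ for the boundary contribution. Since $U$ is bounded and bounded away from the origin, this gives $\|\varphi\|_{L^2(\Omega\cap U)}\le C(\Omega)\|\nabla\varphi\|_{L^2(\Omega)}$, closing the estimate.

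The hard part will be the boundary integral on $\Gamma$ in the Hardy inequality: its sign is not automatically favorable without a star-shapedness assumption on $\Omega^c$ relative to the origin. I expect to circumvent this either by coupling the Hardy inequality with the reflection-based trace estimate above into a single absorbable inequality, or by replacing the vector field $x/|x|^2$ with a bounded divergence-free correction whose normal component vanishes on $\Gamma$. Once the extension estimate is in place, the classical Sobolev inequality on $\R^n$ delivers the bound with constant $C_n(\Omega)$ depending on both the dimension and the geometry of $\Omega$ through the chart constants and the Hardy constant.
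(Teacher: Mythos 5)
The paper offers no proof of this theorem at all: it is quoted from the cited literature (Dong, Dong--Ou, Ou, Lu--Ou; the reference \cite{ou2} is precisely a paper devoted to this Poincar\'e--Sobolev inequality on exterior domains). So your proposal must stand on its own, and as written it does not quite close. The overall architecture --- extend across $\Gamma$ by reflection, apply the Gagliardo--Nirenberg--Sobolev inequality on $\R^n$, and absorb the lower-order term $\|\varphi\|_{L^2(\Omega\cap U)}$ coming from the cutoffs --- is sound, but everything hinges on the one step you yourself flag as open: controlling $\|\varphi\|_{L^2}$ on a bounded neighborhood of $\Gamma$ by $\|\nabla\varphi\|_{L^2(\Omega)}$ alone. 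The Hardy inequality you propose via $\mathrm{div}(x/|x|^2)=(n-2)/|x|^2$ genuinely produces the boundary term $\frac{1}{n-2}\int_\Gamma|\varphi|^2\,\frac{x\cdot\nu}{|x|^2}\,dS$, whose sign is uncontrolled without star-shapedness of $\Omega^c$, and your first fallback (estimating that trace term by the reflection-based trace estimate) is circular, since the trace estimate is exactly what needs the interior $L^2$ control. Your second fallback can in fact be made rigorous --- take $V=x/|x|^2+\nabla h$ with $h$ harmonic in $\Omega$, $\partial_\nu h=-x\cdot\nu/|x|^2$ on $\Gamma$, $h\to0$ at infinity, so that $\mathrm{div}\,V=(n-2)/|x|^2$, $V\cdot\nu=0$ on $\Gamma$, and $|V|\le C/|x|$ on all of $\Omega$; then weighted Cauchy--Schwarz closes the Hardy inequality for $\varphi\in\mathcal{V}_0$ --- but as submitted this is a conjecture, not a proof, so the argument has a genuine hole at its load-bearing step.

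Two further remarks. First, there is a route that avoids the boundary term entirely and is closer in spirit to what the cited sources do: for $\tilde\varphi\in C_0^\infty(\R^n)$ and $\Omega^c\subset B_{R}$, integrate $\partial_r\tilde\varphi$ along rays from infinity to get, for $\rho\in[2R,3R]$,
\begin{equation*}
\int_{S^{n-1}}|\tilde\varphi(\rho\omega)|^2\,d\omega
\le \Bigl(\int_{2R}^{\infty}r^{1-n}\,dr\Bigr)\int_{|x|>2R}|\nabla\tilde\varphi|^2\,dx
\le C(R,n)\,\|\nabla\varphi\|_{L^2(\Omega)}^2,
\end{equation*}
which is finite precisely because $n\ge3$; integrating in $\rho$ bounds $\|\varphi\|_{L^2(B_{3R}\setminus B_{2R})}$, and a standard compactness Poincar\'e inequality on the bounded Lipschitz domain $\Omega\cap B_{3R}$ (anchored on that annulus, using connectedness of $\Omega$) then yields $\|\varphi\|_{L^2(\Omega\cap B_{3R})}\le C\|\nabla\varphi\|_{L^2(\Omega)}$, after which your extension-plus-GNS step goes through verbatim. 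Second, your opening reduction ("continuity of both sides in the $\mathcal{V}$-norm") presumes the $L^{2n/(n-2)}$ norm is continuous on $\mathcal{V}$ before the inequality is known; the correct phrasing is that once the inequality holds on $\mathcal{V}_0$, every $\mathcal{V}$-Cauchy sequence is Cauchy in $L^{2n/(n-2)}(\Omega)$, so the bound passes to the completion.
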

Now we propose our variational problem for \eqref{equ-modify} in the space $\mathcal{V}$. For the given $\psi$, let
$$
\phi = \varphi +  q_{\infty} x_1, \quad
G(\Lambda, \psi)=\frac{1}{2}\int_0^\Lambda\tilde{\rho}(v, \psi)dv,
$$
and we define a functional $I(\varphi,  q^{\infty})$
\begin{align}\label{equ-I}
		I(\varphi,  q_{\infty})=&\int_{\Omega}\left[G\left(|\nabla\phi|^2, \psi\right)
		-G\left(( q_{\infty})^2, \psi\right)-2G_v\left(( q_{\infty})^2, \psi\right) q_{\infty}
		\left(\partial_1\phi- q_{\infty}\right)\right]dx\\
		&+\int_{\Omega}2G_{vw}\left(( q_{\infty})^2, \psi\right) q_{\infty}\partial_1\psi\left(\phi- q_{\infty} x_1\right)\,dx-\int_\Gamma 2 G_v \left(( q_{\infty})^2, \psi\right) q_{\infty}
		\left(\phi- q_{\infty} x_1\right)\nu_1ds\nonumber,
\end{align}
where $\nu_1$ is the first component of the outward normal $\nu=(\nu_1, \cdots, \nu_n)$.

With the condition $\partial_1 \psi \in L^{\frac{2n}{n+2}}(\Omega)$, the existence of a solution to \textbf{Problem 3 ($q_\infty$)} is equivalent to the following variational problem:

\textbf{Problem 4 ($q_\infty$)}: Find a minimizer $\bar\varphi\in\mathcal{V}$  such that
$$
I\left(\bar\varphi, q_{\infty}\right)=\min_{\varphi\in\mathcal{V}}I\left(\varphi, q_{\infty}\right).
$$

It is direct to check that the equation \eqref{equ-modify} is the Euler-Lagrangian equation of our variation problem. Let
$\eta\in C_0^\infty({\R}^n)$,
the first variation of $I\left(\varphi,  q^{\infty}\right)$ with
$\eta$ would be
\begin{align*}
		0=&\int_\Omega\left[2G_v \left(|\nabla\phi|^2, \psi \right)\nabla\phi\nabla\eta-2G_v
		\left(( q_{\infty})^2, \psi\right) q_{\infty}\partial_1\eta\right]dx\\
		&+\int_\Omega 2 G_{vw}\left(( q_{\infty})^2, \psi\right) q_{\infty}\partial_1\psi\eta\,dx-\int_\Gamma 2 G_v \left(( q_{\infty})^2, \psi \right) q_{\infty}\eta\nu_1dS.
\end{align*}
The last three terms are cancelled by integration by part. Then, the first variation of
$I\left(\varphi,  q_{\infty}\right)$
associated with $\eta$ is
\begin{align*}
		0=&\int_\Omega 2G_v \left(|\nabla\phi|^2, \psi\right)\nabla\phi\nabla\eta dx\\
		=&\int_\Omega \tilde{\rho} \left(|\nabla\phi|^2, \psi\right)\nabla\phi\nabla\eta dx\\
		=&-\int_\Omega\mbox{div} \left(\tilde{\rho} \left(|\nabla\phi|^2, \psi\right)
		\nabla\phi\right)\eta dx
		-\int_\Gamma \tilde{\rho}\left(|\nabla\phi|^2, \psi\right)
		\frac{\partial\phi}{\partial\nu}\eta dS.
\end{align*}

For our variational problem, we have the following theorem:
\begin{theorem}\label{th3} \textbf{Problem 4 ($q_\infty$)} has a unique minimizer $\bar\varphi\in\mathcal{V}$. Moreover,
	\begin{equation}
		\label{ff}
		\int_{\Omega}|\nabla\bar\varphi|^2\, dx \leq C,
	\end{equation}
where $C$ dependants on $q_\infty$, $\psi$ and $\Omega$.
\end{theorem}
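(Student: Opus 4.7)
The plan is to apply the direct method of the calculus of variations. The crucial input is the uniform ellipticity \eqref{cutoffellpitic} of the cut-off equation, which says that the Hessian of $F(p):=G(|p|^2,\psi)$ with respect to $p$ coincides with the matrix $\tilde a_{ij}$ and is bounded above and below by positive constants depending only on $\theta$ and $\psi$. Hence $F$ is uniformly convex and has Lipschitz gradient in $p$, and this is what will drive the whole argument.

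Writing $\phi=\varphi+q_\infty x_1$, the first bracket in \eqref{equ-I} is exactly the second-order Taylor remainder of $F$ at the constant state $q_\infty e_1$, so the two-sided Hessian bound yields the pointwise comparison
$$
c|\nabla\varphi|^2 \leq G(|\nabla\phi|^2,\psi)-G(q_\infty^2,\psi)-2G_v(q_\infty^2,\psi)q_\infty(\partial_1\phi-q_\infty) \leq C|\nabla\varphi|^2,
$$
which makes the first integral in \eqref{equ-I} finite on $\mathcal{V}$ and comparable to $\|\nabla\varphi\|_{L^2(\Omega)}^2$. The interior lower-order term is controlled, via H\"older's inequality and the Sobolev inequality \eqref{pineq}, by $C\|\partial_1\psi\|_{L^{2n/(n+2)}(\Omega)}\|\nabla\varphi\|_{L^2(\Omega)}$, using that $G_{vw}(q_\infty^2,\psi)$ is bounded because $\psi$ is bounded and the cut-off keeps $\tilde\rho$ smooth. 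The boundary integral over the compact hypersurface $\Gamma$ is handled by the trace theorem on a bounded neighborhood of $\Gamma$ (where $\varphi\in H^1$ by \eqref{pineq}), and is also $O(\|\nabla\varphi\|_{L^2(\Omega)})$. Combining all three estimates gives
$$
I(\varphi,q_\infty) \geq c\|\nabla\varphi\|_{L^2(\Omega)}^2 - C(q_\infty,\psi,\Omega)\|\nabla\varphi\|_{L^2(\Omega)},
$$
so $I$ is well-defined, bounded below and coercive on $\mathcal{V}$.

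From here existence is standard: take a minimizing sequence $\{\varphi_k\}$, extract a weak subsequential limit $\bar\varphi\in\mathcal{V}$ by coercivity and reflexivity, and verify weak lower semicontinuity. The principal integral is weakly lower semicontinuous because $F$ is convex in the gradient variable; the two linear lower-order terms pass to the weak limit by the Sobolev/trace arguments used above (the interior term via the duality $L^{2n/(n+2)}$--$L^{2n/(n-2)}$, the boundary one via Rellich--Kondrachov on a bounded neighborhood of $\Gamma$). Uniqueness is immediate from the \emph{strict} convexity of $F$ in $p$ together with the linearity of the remaining terms in $\varphi$, since any two minimizers would then have identical gradients and hence, belonging to $\mathcal{V}$, must coincide. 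Finally, the trivial competitor $\varphi\equiv 0$ gives $\phi=q_\infty x_1$ and $I(0,q_\infty)=0$, so $I(\bar\varphi,q_\infty)\leq 0$, and together with the coercive lower bound above this yields the energy estimate \eqref{ff}.

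The main obstacle is neither the convexity nor the abstract direct method, but rather handling the linear terms on the \emph{unbounded} exterior domain, where an ordinary Poincar\'e inequality is unavailable. The hypothesis $\partial_1\psi\in L^{2n/(n+2)}(\Omega)$ in \eqref{psicondition1} is precisely the dual of the Sobolev embedding exponent in \eqref{pineq}, and it is exactly this dual pairing that makes the interior linear functional continuous on $\mathcal{V}$; without it the coercivity estimate above fails. Verifying that $G_v$ and $G_{vw}$ remain globally bounded along $(q_\infty^2,\psi(x))$, so that the constants in the estimates are genuinely independent of $x$, also requires the boundedness of $\psi$ combined with the explicit cut-off definition of $\tilde\rho$.
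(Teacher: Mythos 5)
Your proposal is correct and rests on exactly the same key estimates as the paper: the uniform two-sided Hessian bound \eqref{cutoffellpitic} making the Taylor remainder of $F(p)=G(|p|^2,\psi)$ comparable to $|\nabla\varphi|^2$, the dual pairing of $\partial_1\psi\in L^{2n/(n+2)}$ against the Sobolev inequality \eqref{pineq} for the interior linear term, a Hardy/trace-type bound $\int_\Gamma|\varphi|\,dS\lesssim\|\varphi\|_{\mathcal V}$ for the boundary term, and comparison with the competitor $\varphi\equiv 0$ for \eqref{ff}. The only methodological divergence is in the existence and uniqueness steps: the paper exploits the \emph{uniform} convexity quantitatively, showing $B(\varphi_1)+B(\varphi_2)-2B(\tfrac{\varphi_1+\varphi_2}{2})\geq \tfrac{C_1}{2}\|\varphi_1-\varphi_2\|_{\mathcal V}^2$ so that every minimizing sequence is Cauchy in $\mathcal V$ (the classical Hilbert-space method, which also yields uniqueness by alternating two minimizers), whereas you use weak compactness plus weak lower semicontinuity of the convex principal part and obtain uniqueness from strict convexity alone. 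Both routes are standard and valid here; the paper's version buys strong (rather than merely weak) convergence of minimizing sequences essentially for free, a fact it reuses later in the continuity-in-$q_\infty$ argument of Theorem \ref{6.1}, while your weak-convergence route is marginally more general since it would survive with only strict, non-uniform convexity.
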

\begin{proof}
	Step 1. $I(\varphi, q_{\infty})$ is coercive in $\mathcal{V}$.
	
	Firstly, we denote
	$$
	B\left(\varphi,  q_{\infty}\right)=\int_\Omega \left[G\left(|\nabla\phi|^2, \psi\right)
	-G\left(\left( q_{\infty}\right)^2, \psi\right)-2G_v\left(\left( q_{\infty}\right)^2, \psi\right)
	q_{\infty}\left(\partial_1\phi- q_{\infty}\right)
	\right]dx,
	$$
	and will prove $B\left(\varphi,  q_{\infty}\right)$ is uniformly convex in the space $\mathcal{V}$.
	
	Let $e_1=(1, 0, \cdots ,0)$, then we have
	$$
	\nabla\phi -  q_{\infty} e_1 =\nabla\varphi.
	$$
	We denote $p=(p_1, \cdots, p_n)$, $F(p)=G\left(|p|^2, \psi\right)$. Then by direct computation, we can get that
	\begin{align*}
			&G\left(|\nabla\phi|^2, \psi\right)-G\left(\left( q_{\infty}\right)^2, \psi\right)
			-2G_v \left(\left( q_{\infty}\right)^2, \psi\right) q_{\infty}
			\left(\partial_1\phi- q_{\infty}\right)\\
			&= F(\nabla\phi) - F\left( q_{\infty} e_1\right)
			-\partial_{p_1}F\left( q_{\infty} e_1\right)\left(\nabla\phi- q_{\infty} e_1\right)\\
			&=\sum_{i, j=1}^n\int_0^1(1-t)\partial_{p_ip_j} F\left(t\nabla\phi+(1-t) q_{\infty} e_1\right)dt
			\partial_i\varphi\partial_j\varphi.
	\end{align*}
	It is easy to check $\partial_{pp}^2F$ is uniformly positive.  In fact, we have
	$$
	\left(\partial_{pp}^2F(p)\right)_{i,j} =\tilde {\rho}\delta_{ij}
	+ 2 \tilde{\rho}_v p_ip_j=\tilde{a}_{ij}.
	$$
	From the cut off $\tilde{\rho}$ property \eqref{cutoffellpitic}, we get the uniformly positivity of $\partial_{pp}^2F$. As consequence,
	\begin{equation}\label{ineq-phi}
		\frac{C_1}{2}\int|\nabla\varphi|^2dx\leq B\left(\varphi,  q_{\infty}\right)\leq
		\frac{\tilde{C}_1}{2}\int|\nabla\varphi|^2dx.
	\end{equation}
	
	With a similar produce, for any $\varphi_1, \varphi_2\in \mathcal{V}$, we have that
	\begin{align*}
			&B\left(\varphi_1,  q_{\infty}\right) + B \left(\varphi_2,  q_{\infty}\right)
			-2B\left(\frac{\varphi_1+\varphi_2}{2},  q_{\infty}\right)\\
			&=F(\nabla\phi_1) + F(\nabla\phi_2)-2F\left(\frac{\nabla\phi_1+\nabla\phi_2}{2}\right)\\
			&\geq \frac{C_1}{2} \|\varphi_1-\varphi_2\|_{\mathcal{V}}^2,
	\end{align*}
	which proves the uniformly convexity of $B$.
	
	Secondly, the surface integrand in \eqref{equ-I} is continuous linear functional of $\varphi$. In fact, by Hardy's inequality, there is a constant $C(\Omega)$ such that
	\begin{equation}\label{ineq-phi-surface}
		\int_\Gamma |\varphi|dS \leq C(\Omega) \left(\int_\Omega |\nabla\varphi|^2\, dx\right)^{\frac{1}{2}}.
	\end{equation}
	
	Thirdly, the body integrand
	$$\int_{\Omega}2G_{vw}\left(( q_{\infty})^2, \psi\right) q_{\infty}\partial_1\psi\left(\phi- q_{\infty} x_1\right) dx
	=\int_{\Omega}2G_{vw}\left(( q_{\infty})^2, \psi\right) q_{\infty}\partial_1\psi\varphi dx$$
	 is also continuous linear functional of $\varphi$, which relies on $G_{vw}\left(( q_{\infty})^2, \psi\right)\in L^{\infty}(\Omega)$ and $\partial_{1}\psi \in L^{\frac{2n}{n+2}}(\Omega)$ and the Poincare's inequality \eqref{pineq} on $\varphi$.

	Now we can prove the coercive for $I(\varphi, q_{\infty})$. By \eqref{ineq-phi-surface}, we have
	\begin{align*}
			&\left|\int_\Gamma 2 G_v\left(\left( q_{\infty}\right)^2, \psi\right)
			q_{\infty} \left(\phi- q_{\infty} x_1\right)\nu_1 dS\right|\\
			&=\left|\int_\Gamma 2 G_v\left(\left( q_{\infty} \right)^2, \psi\right) q_{\infty} \varphi \nu_1 dS\right|\\
			&\leq C\int_\Gamma |\varphi|dS \\
			&\leq\frac{C_1}{8} \int_\Omega |\nabla\varphi|^2 dx +C_2.
	\end{align*}
	Similarly, by \eqref{pineq}, we have
	\begin{equation*}
		|\int_{\Omega}2G_{vw}\left(( q_{\infty})^2, \psi\right) q_{\infty}\partial_1\psi\left(\phi- q^{\infty} x_1\right)\,dx| \leq\frac{C_1}{8} \int_\Omega |\nabla\varphi|^2 dx +C_3.
	\end{equation*}
	Therefore from \eqref{ineq-phi}, we get
	\begin{equation}\label{I-phi}
		I\left(\varphi,  q_{\infty}\right) \geq \frac{C_1}{4}\int_\Omega |\nabla\varphi|^2 dx -C_4.
	\end{equation}
	
	Step 2. The existence of minimizer $\bar\varphi \in\mathcal{V}$. 	
	
	First, we examine the continuity of $I(\varphi,  q_{\infty})$ in $\mathcal{V}\times \R^+$. For the surface integral part of $I(\varphi,  q_{\infty})$ and the body integrand $\int_{\Omega}2G_{vw}\left(( q_{\infty})^2, \psi\right) q_{\infty}\partial_1\psi\left(\phi- q_{\infty} x_1\right)\,dx$, the continuity is showed in Step 1.
	
	For $B(\varphi,q_{\infty})$, let $\phi_i=\varphi_i+ q_{\infty} x_1$, for $i=1,2$. Recall the definition of $F$ in Step 1, we have
	\begin{align*}
			&\left[G\left(|\nabla\phi_1|^2, \psi\right)-G\left(\left( q_{\infty}\right)^2, \psi\right)
			-2G_v\left(\left( q_{\infty}\right)^2, \psi\right) q_{\infty}
			\left(\partial_1\phi_1- q_{\infty}\right)\right]
			\\&-\left[G\left(|\nabla\phi_2|^2, \psi\right)-G\left(\left( q_{\infty}\right)^2, \psi\right)
			-2G_v\left(\left( q_{\infty}\right)^2, \psi\right) q_{\infty}
			\left(\partial_1\phi_2- q_{\infty}\right)\right]\\
			&=\left[F(\nabla\phi_1) - F\left( q_{\infty} e_1\right)
			-\partial_{p} F \left( q_{\infty} e_1\right) \nabla\varphi_1\right]
			-\left[F(\nabla\phi_2) - F\left( q_{\infty} e_1\right)
			-\partial_{p} F \left( q_{\infty} e_1\right) \nabla\varphi_2\right]\\
			&=\int_0^1\partial_pF\left(t\nabla\varphi_1+(1-t)\nabla\varphi_2
			+ q_{\infty} e_1\right)dt (\nabla\varphi_1-\nabla\varphi_2)
			-\partial_pF\left( q_{\infty} e_1\right)(\nabla\varphi_1-\nabla\varphi_2)\\
			&=\sum_{i, j=1}^n \int_0^1\int_0^1 \partial_{p_ip_j} F \left(st\nabla\varphi_1+s(1-t)\nabla\varphi_2
			+ q_{\infty} e_1\right)ds \left[t\partial_i\varphi_1+(1-t)\partial_i\varphi_2\right]dt
			(\partial_j\varphi_1-\partial_j\varphi_2).
		\end{align*}
	Then combining with the surface integral, we have
	$$|I(\varphi_1, q_{\infty})-I(\varphi_1,q_{\infty})|\leq C(1+\|\varphi_1\|_{\mathcal{V}}+\|\varphi_2\|_{\mathcal{V}})\|\varphi_1-\varphi_2\|_{\mathcal{V}},$$
	where $C$ depends on $q_{\infty}$, $n$, $\Omega$, $\theta$, and $\psi$.
	
	The continuity of $I\left(\varphi,  q_{\infty}\right)$ on $ q_{\infty}$ follows from the equality below,
	\begin{align}\label{4.8}
	I(\varphi,  q_{\infty})=&\int_\Omega \int_0^1 (1-t) D_{p_ip_j}^2
	F\left(t\nabla\varphi+ q_{\infty} e_1\right)dt \partial_i\varphi \partial_j\varphi dx\\
	&+\int_{\Omega}2G_{vw}\left(( q_{\infty})^2, \psi\right) q_{\infty}\partial_1\psi \varphi\,dx-\int_\Gamma 2 G_v \left(( q_{\infty})^2, \psi\right) q_{\infty}
	\varphi \nu_1ds\nonumber.
	\end{align}
	Then, by applying the standard Hilbert method, we know every minimizing sequence $\phi_m=\varphi_m +  q_{\infty} x_1$ is convergent. Then the continuity of the functional with respect to $\varphi$ in $\mathcal{V}$ will guarantee the existence of a minimizer $\bar\varphi$.

	Step 3. The uniqueness of minimizer $\bar\varphi$.
	
	We pick a minimizing sequence composed of two minimizers alternatively. A minimizing sequence is always convergent, so any two minimizers are the same.
	
	Step 4. To prove \eqref{ff}, we compare $I(\varphi, q_{\infty})$ with $I(0, q_{\infty})$. Then \eqref{ff} follows by \eqref{I-phi} easily.
\end{proof}

\section{modified flows}
In the last section, we have constructed the unique solution of \textbf{Problem 3 ($q_\infty$)} by solving \textbf{Problem 4 ($q_\infty$)}. In this section, we will show the further regularity.

First, we need the following proposition:
\begin{proposition}\label{prop-modify}
	Let $a^l_{ij}$ for $i,j = 1, \dots, n$ be measurable functions on $B_1$, and $\lambda$ be a positive constant. Assume that
	$$
	\forall ~\xi\in\R^n, ~\lambda|\xi|^2\leq a^l_{ij}\xi_i\xi_j\leq \lambda^{-1} |\xi|^2,
	~\mbox{and} ~f^l_i\in L^q, ~ q >n.
	$$
	Let $w(y)$ be a function in $H^1$, suppose
	$$
	\sum_{i,j=1}^n\partial_i\left[a^l_{ij}(y) \partial_jw(y)\right] + \sum_{i=1}^n\partial_if^l_i =0
	$$
	is satisfied weakly. Then $w(y)$ is H\"{o}lder continuous in $B_{{1}/{2}}$ and there exist two constants $0<\alpha\leq 1$, $k$, depending on $\lambda$ such that
	$$
	\sup_{y\in B_{1/2}}|w(y)|\leq k\left(||w||_{L^2(B_1)}+ ||f_i^l||_{L^q(B_1)} \right),
	$$
	$$
	\sup_{y_1, y_2\in B_{1/2}}\frac{|w(y_1)-w(y_2)|}{|y_1-y_2|^{\alpha}}
	\leq k\left(||w||_{L^2(B_1)}+ ||f_i^l||_{L^q(B_1)} \right).
	$$
\end{proposition}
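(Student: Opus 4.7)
The proposition is a standard De Giorgi--Nash--Moser type regularity result for divergence form equations with a divergence right-hand side, and the approach I would take is the classical one via Caccioppoli inequalities and level-set iteration, refined so that the $L^q$ source term $f^l_i$ with $q>n$ can be absorbed.

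The first step is to derive a Caccioppoli inequality for truncations. For $k\in\mathbb{R}$ and a cut-off function $\chi\in C_0^\infty(B_1)$, I would insert $\eta = \chi^2 (w-k)^+$ as a test function into the weak formulation. Using the uniform ellipticity $\lambda |\xi|^2 \le a^l_{ij}\xi_i\xi_j \le \lambda^{-1}|\xi|^2$ on the left and Cauchy--Schwarz on the right, together with Hölder's inequality applied to $\int f^l_i \partial_i(\chi^2(w-k)^+)$, one obtains an inequality of the form
\begin{equation*}
\int_{B_r} |\nabla (w-k)^+|^2 \chi^2 \,dy \;\le\; C(\lambda)\Bigl(\int_{B_R} \frac{|(w-k)^+|^2}{(R-r)^2}\,dy + \|f^l_i\|_{L^q(B_R)}^2 |A(k,R)|^{1-\tfrac{2}{q}}\Bigr),
\end{equation*}
where $A(k,R)=\{w>k\}\cap B_R$. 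The condition $q>n$ is crucial since it makes the exponent $1-2/q$ strictly larger than the Sobolev scaling needed for a genuine iteration gain.

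The second step is the De Giorgi iteration on a sequence of shrinking balls $B_{r_j}$ with $r_j \downarrow 1/2$ and increasing levels $k_j \uparrow k^*$. Combining the Caccioppoli inequality with the Sobolev embedding $H^1\hookrightarrow L^{2n/(n-2)}$ yields a nonlinear recursion $U_{j+1}\le C b^j U_j^{1+\delta}$ for $U_j:=\int_{B_{r_j}}((w-k_j)^+)^2\,dy$, with $\delta>0$ given by the slack $1-n/q>0$. Choosing $k^*$ proportional to $\|w\|_{L^2(B_1)}+\|f^l_i\|_{L^q(B_1)}$ makes $U_0$ small enough that $U_j\to 0$, yielding the $L^\infty$ bound. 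Running the same argument on $-w$ with level $-k$ gives the two-sided bound.

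For the Hölder estimate I would then pass to oscillation decay. Rescaling to arbitrary balls $B_\rho(y_0)\subset B_{1/2}$ and applying the same iteration to $w-\mathrm{med}\, w$ truncated at the median level gives a density estimate: either $\{w>\tfrac{M+m}{2}\}$ or $\{w<\tfrac{M+m}{2}\}$ occupies a definite fraction of $B_\rho$, after which a second De Giorgi iteration (or equivalently a weak Harnack inequality, cf.\ Gilbarg--Trudinger Ch.~8 or Han--Lin Ch.~4) yields $\mathrm{osc}_{B_{\rho/2}} w \le \theta\, \mathrm{osc}_{B_\rho} w + C\rho^{1-n/q}\|f^l_i\|_{L^q}$ for some $\theta<1$. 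Iterating this dichotomy on dyadic balls produces $|w(y_1)-w(y_2)|\le C|y_1-y_2|^\alpha(\|w\|_{L^2(B_1)}+\|f^l_i\|_{L^q(B_1)})$ with an exponent $\alpha=\alpha(\lambda,n,q)\in(0,1)$, exactly as claimed.

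The main obstacle is the inhomogeneous term: the proof must be set up so that the $L^q$ norm of $f^l_i$ enters the iteration with a strictly positive power of $|A(k,R)|$ or $\rho$, which is what ultimately gives the gain in the recursion and fixes the Hölder exponent. The restriction $q>n$ is precisely what guarantees this subcritical scaling; without it the term would fail to close the iteration. Once this is arranged, the rest of the argument is the standard machinery and does not interact with the particular structure of our problem.
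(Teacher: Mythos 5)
Your proposal is correct and follows exactly the standard De Giorgi--Nash--Moser route (Caccioppoli inequality for truncations, level-set iteration using the slack $1-n/q>0$, then oscillation decay for the H\"older estimate); the paper gives no proof of its own but simply cites Gilbarg--Trudinger, where this is precisely the argument carried out. There is nothing to add or correct.
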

The proof of this proposition can be found in \cite{Gilbarg-Trudinger}.
\begin{lemma}\label{lem-3}
	Let $\Omega'$ be a bounded interior subregion of $\Omega$, for $\nabla\psi\in L^q$,
	$q>n$, then there are constants $0<\alpha<1$ and $C$ depending on $\Omega$, $\Omega'$,  $ q_{\infty}$ and $\psi$ such that
	$$
	\sup_{x\in\Omega'}|\nabla\phi|\leq C,
	$$
	$$
	\sup_{x_1,x_2\in\Omega'} \frac{|\nabla\phi(x_1)-\nabla\phi(x_2)|}{|x_1-x_2|^\alpha}\leq C.
	$$
\end{lemma}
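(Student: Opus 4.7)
The plan is to apply Proposition~\ref{prop-modify} to each first derivative $w_l := \partial_l \phi$ of the solution $\phi$ constructed in Theorem~\ref{th3}. Theorem~\ref{th3} already supplies $\nabla\phi\in L^2_{\mathrm{loc}}(\Omega)$, and the cut-off bound \eqref{cutoffellpitic} makes the modified equation $(\ref{equ-modify})_1$ uniformly elliptic throughout $\Omega$. The first preparatory step is to upgrade $\phi$ to $H^2_{\mathrm{loc}}(\Omega)$ by a standard difference-quotient argument: testing $(\ref{equ-modify})_1$ with $-\partial_l^{-h}(\eta^2 \partial_l^h \phi)$ for a smooth cutoff $\eta$, using the uniform ellipticity to absorb the principal terms, and controlling the $\tilde\rho_w\nabla\psi$ contribution as a source term via $\nabla\psi\in L^q_{\mathrm{loc}}$ with $q>n$.

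With $\phi\in H^2_{\mathrm{loc}}(\Omega)$ in hand, I would differentiate $(\ref{equ-modify})_1$ in $x_l$; using $\partial_l\tilde\rho = 2\tilde\rho_v\sum_k\partial_k\phi\,\partial_k w_l + \tilde\rho_w\,\partial_l\psi$ one finds that $w_l$ is a weak solution of
\begin{equation*}
\sum_{i,j}\partial_i\bigl(a^l_{ij}\,\partial_j w_l\bigr) \;+\; \sum_i\partial_i f^l_i \;=\; 0,
\end{equation*}
with $a^l_{ij} = \tilde\rho\,\delta_{ij} + 2\tilde\rho_v\,\partial_i\phi\,\partial_j\phi$ and $f^l_i = \tilde\rho_w\,\partial_l\psi\,\partial_i\phi$. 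By \eqref{cutoffellpitic}, $a^l_{ij}$ is uniformly elliptic with constants depending only on $\theta$ and $\psi$, so $w_l$ fits the hypotheses of Proposition~\ref{prop-modify} as soon as $f^l_i\in L^{q'}_{\mathrm{loc}}$ for some $q'>n$.

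To reach this integrability threshold I would bootstrap. On $\overline{\Omega'}$, which lies away from the origin by hypothesis, the weight $|x|^\beta$ is bounded above and below, so \eqref{psicondition1} yields $\nabla\psi\in L^q(\Omega')$. Starting from $\nabla\phi\in L^2_{\mathrm{loc}}$, Hölder's inequality places $f^l_i\in L^{r_0}_{\mathrm{loc}}$ with $1/r_0 = 1/q + 1/2$. A local $L^p$-estimate for the linear divergence-form equation above (via Moser iteration, or a Stampacchia-type argument) combined with the Sobolev embedding $W^{1,r}\hookrightarrow L^{r^*}$ then strictly increases the local integrability of $\nabla\phi$ at each round. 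Since $q>n$ the gain from Sobolev dominates the Hölder loss, and after finitely many iterations one has $\nabla\phi\in L^p_{\mathrm{loc}}$ with $p$ so large that $f^l_i\in L^{q'}_{\mathrm{loc}}$ with $q'>n$. Proposition~\ref{prop-modify} then delivers the local sup and Hölder bounds for $w_l = \partial_l\phi$; covering $\Omega'$ by finitely many balls with closure in $\Omega$ and taking the maximum over $l=1,\dots,n$ gives the two bounds claimed in the lemma.

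The main obstacle will be the bootstrap step: one must verify that the exponents of $\nabla\phi$ strictly increase and reach $nq/(q-n)$ in a finite number of iterations, all while the uniform ellipticity constants from \eqref{cutoffellpitic} (which depend on $\theta$ and $\psi$ but not on $\nabla\phi$) are preserved and each intermediate step retains enough regularity to legitimize both the difference-quotient argument and the differentiation of the equation. The initial $H^2_{\mathrm{loc}}$ estimate is also delicate, since the forcing produced by $\nabla\psi$ is merely in $L^q$ rather than $L^2$, and requires careful use of Young's inequality together with the a priori $L^2$ bound on $\nabla\phi$ from Theorem~\ref{th3}.
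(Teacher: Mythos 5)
Your high-level strategy---differentiate $(\ref{equ-modify})_1$, view $w_l=\partial_l\phi$ as a weak solution of a uniformly elliptic divergence-form equation with data $f^l_i=\tilde\rho_w\,\partial_l\psi\,\partial_i\phi$, justify the formal differentiation by finite differences, and invoke Proposition~\ref{prop-modify}---is exactly what the paper does. But the entire bootstrap is a detour caused by missing the one structural fact that makes the lemma immediate: the cut-off was designed so that $\tilde\rho_v$ and $\tilde\rho_w$ vanish identically on the set where $|\nabla\phi|\ge(1-\theta)q_{cr}(\psi)$ (there $\tilde\rho$ equals the constant $\sup_{x\in\Omega}\rho\left((1-\theta)^2q_{cr}^2(\psi(x))-2\psi(x)\right)$), while on the complementary set one has $|\nabla\phi|\le(1-\theta)q_{cr}(\psi)\le C$ because $\psi$ is bounded. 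Hence $|\tilde\rho_w\,\partial_i\phi|\le C$ \emph{pointwise}, so $|f^l_i|\le C|\partial_l\psi|\in L^q$ with $q>n$ from the outset, and Proposition~\ref{prop-modify} applies in a single step. This is precisely the inequality $|\tilde b_i\partial_i\phi|\le C|\partial_i\psi|$ already recorded in Section~3.

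This is not merely a matter of elegance: as written, your argument has a gap at its first rung. With only $\nabla\phi\in L^2_{\mathrm{loc}}$ you place $f^l_i$ in $L^{r_0}$ with $1/r_0=1/q+1/2$, and typically $r_0<2$ (e.g.\ $n=3$, $q=4$ gives $r_0=4/3$). For divergence-form operators with merely bounded measurable coefficients, both the Caccioppoli/difference-quotient estimate behind your $H^2_{\mathrm{loc}}$ claim and the Moser/Stampacchia $L^p$-improvement require pairing $f$ against $\nabla w$ (or against gradients of truncations of $w$), which needs $f\in L^2_{\mathrm{loc}}$; below the Meyers exponent there is no gradient integrability gain to compensate, so the Hölder exponents do not close and the iteration cannot start. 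In other words, the bootstrap you propose to \emph{reach} $f^l_i\in L^{q'}$, $q'>n$, already presupposes more integrability of $f^l_i$ than you have granted yourself---whereas the pointwise bound above hands you $f^l_i\in L^q$, $q>n$, for free and lets both the $H^2_{\mathrm{loc}}$ step and Proposition~\ref{prop-modify} go through directly.
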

\begin{proof}
	Denote $\varphi'=\partial_k\phi$ for $k=1, \cdots, n$. Take the $k$-th partial derivative of the equation $(\ref{equ-modify})_1$ formally to get that
	$$
	\sum_{i, j= 1 }^n \partial_i \left( \tilde{a}_{ij} \partial_j \varphi' \right)+ \sum_{i=1}^n \partial_i\left(\tilde{\rho}_w\partial_k\psi\partial_i\phi\right)=0.
	$$
	By the definition of the cut-off density $\tilde{\rho}$, $\tilde{a}_{ij}$ has uniformly positive eigenvalues, the equation is hence uniform elliptic.
	Also, for $i, k=1, \cdots, n$,
	$$
	|\tilde{\rho}_w\partial_k\psi\partial_i\phi|\leq C |\partial_k\psi|
	$$
		By $\nabla \psi\in L^q$, we can show 
	$
	\tilde{\rho}_w\partial_k\psi\partial_i\phi
	$
	are bounded in $L^q$. By proposition \ref{prop-modify}, \eqref{ff}, with $B$ being scaled to arbitrary ball, the lemma follows directly although the proof has been formal. This formality can be substantiated by considering approximation of derivatives by finite differences, a standard practice in elliptic PDE theory.
\end{proof}

\begin{lemma}\label{lem-4}
	Let $\Omega'$ be $B_R\cap(\Omega\cup \Gamma)$ for a large $R$, then conclusion of Lemma \ref{lem-3} holds.
\end{lemma}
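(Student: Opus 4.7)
The plan is to reduce the boundary case to the interior case treated in Lemma \ref{lem-3} by a standard flattening-and-reflection argument. Since $\Gamma \in C^{2,\alpha}$ is compact, the set $B_R \cap \overline\Omega$ can be covered by finitely many patches: interior balls compactly contained in $\Omega$, to which Lemma \ref{lem-3} applies directly, and finitely many boundary patches centered at points of $\Gamma \cap \overline{B_R}$. It therefore suffices to establish the desired estimate in each boundary patch.

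Fix $x_0 \in \Gamma \cap \overline{B_R}$ and pick a $C^{2,\alpha}$ diffeomorphism $\Phi : U \to B_1$ sending $U \cap \Omega$ onto $B_1^+ := B_1 \cap \{y_n > 0\}$, $U \cap \Gamma$ onto $B_1 \cap \{y_n = 0\}$, and the outward normal $\nu$ to $-e_n$ along the flat boundary. Setting $\Phi_\ast \phi := \phi \circ \Phi^{-1}$, the change of variables transforms \eqref{equ-modify1} into an equation of the same divergence form in $B_1^+$ with new coefficients $\hat a_{ij}$, $\hat b_i$ that remain uniformly elliptic in view of \eqref{cutoffellpitic} and the regularity of $\Phi$; moreover the Neumann condition becomes $\partial_{y_n}(\Phi_\ast\phi) = 0$ on $\{y_n=0\}\cap B_1$.

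Now follow the scheme of Lemma \ref{lem-3}: differentiate the flattened equation in a tangential direction $y_k$ with $k \in \{1, \ldots, n-1\}$ to obtain a uniformly elliptic divergence-form equation for $\varphi' := \partial_k(\Phi_\ast \phi)$ with right-hand side $\partial_i(\hat f_i)$ where $\hat f_i \in L^q$ because $\nabla \psi \in L^q$ and $\nabla\phi \in L^\infty_{\mathrm{loc}}$ by the interior lemma. Because $\partial_{y_n}(\Phi_\ast\phi) = 0$ along the flat face, the tangential derivative $\varphi'$ satisfies a natural homogeneous Neumann condition, so the even reflection $\tilde\varphi'(y', y_n) := \varphi'(y', |y_n|)$ produces an $H^1$ function on all of $B_1$. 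Reflecting the coefficients $\hat a_{ij}$ evenly in $y_n$ when $(i,j)$ has an even number of $n$-indices and oddly otherwise, and extending $\hat f_i$ by the compatible parity, yields a uniformly elliptic equation on the full ball $B_1$ whose weak solution is $\tilde\varphi'$. Proposition \ref{prop-modify} then gives a uniform $L^\infty$ bound and a Hölder estimate for $\tilde\varphi'$ on $B_{1/2}$, hence for each tangential derivative $\partial_k\phi$ up to $\Gamma$ after pulling back. The normal derivative is recovered from the equation: $\hat a_{nn}\partial_{y_n y_n}(\Phi_\ast\phi)$ is an algebraic combination of the tangential second derivatives and of $\nabla\psi$, which, combined with $\partial_{y_n}(\Phi_\ast\phi) = 0$ on the flat face, yields the matching control on $\partial_{y_n}(\Phi_\ast\phi)$ near $\{y_n=0\}$.

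The main technical obstacle is carrying out the reflection cleanly: one must choose the parity of each $\hat a_{ij}$ and each source term $\hat f_i$ so that both the ellipticity bound and the weak form of the equation survive the extension, and verify that the extension of the source term built from $\nabla\psi$ is still in $L^q$ (which is automatic for even/odd reflection in $y_n$). With that bookkeeping in place, a finite cover together with Lemma \ref{lem-3} produces constants depending only on $R$, $\Gamma$, $q_\infty$ and $\psi$, giving the stated uniform bounds on $B_R \cap (\Omega \cup \Gamma)$.
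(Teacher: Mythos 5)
Your route is genuinely different from the paper's. The paper does not flatten and reflect at all: it considers the auxiliary mixed boundary value problem on the truncated domain $B_R\cap\Omega$ (Dirichlet data $\phi$ on $|x|=R$, the natural/conormal condition on $\Gamma$), observes that any solution $U$ and the restriction of $\phi$ are both critical points of the strictly convex functional $\int G(|Dw|^2,\psi)\,dx$ with the same Dirichlet constraint and hence coincide, and then imports the existence of a smooth $U$ \emph{with a priori bounds up to the boundary} from the Ladyzhenskaya--Ural'tseva quasilinear theory \cite{LU_68}. In other words, the paper delegates the boundary estimate to the classical literature via a uniqueness trick, whereas you reprove it by hand. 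Your strategy is the classical alternative and is in principle viable, but two steps are stated in a way that would not survive scrutiny.

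First, the claim that the tangential derivative $\varphi'=\partial_k(\Phi_*\phi)$ satisfies a \emph{homogeneous} natural Neumann condition on the flat face is false in general. The condition that transforms correctly under $\Phi$ and that is needed for the even reflection to produce a weak solution across $\{y_n=0\}$ is the conormal one, $\hat a_{nj}\partial_j\varphi'+\hat f_n=0$. Differentiating the original flux identity $\hat a_{nj}\partial_j(\Phi_*\phi)=0$ tangentially produces an extra term $(\partial_k\hat a_{nj})\,\partial_j(\Phi_*\phi)$ coming from the $y$-dependence of the flattened coefficients (i.e.\ from the curvature of $\Gamma$ and $D^2\Phi$), so $\varphi'$ satisfies an \emph{inhomogeneous} conormal condition with a bounded datum. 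The reflection of $\varphi'$ alone is then not a weak solution on the full ball; you must either absorb this boundary term as an additional source (or correction function) before reflecting, or apply a half-ball version of Proposition \ref{prop-modify} directly. Second, recovering the normal derivative "from the equation" by solving for $\hat a_{nn}\partial_{y_ny_n}(\Phi_*\phi)$ requires pointwise control of second derivatives and of $\nabla\psi$, neither of which is available under the hypothesis $\nabla\psi\in L^q$, $q>n$; the standard fix is to reflect $\Phi_*\phi$ itself evenly (under the conormal condition this \emph{does} give a weak solution of a divergence-form equation on the full ball with measurable coefficients) and obtain H\"older continuity of the full gradient from the interior theory applied to the reflected problem, rather than to reconstruct $\partial_{y_n}$ algebraically. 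A minor further remark: the bound $|\hat f_i|\le C|\partial_k\psi|$ does not need the interior $L^\infty$ bound on $\nabla\phi$ you invoke; it follows from the cut-off structure, since $\tilde\rho_w$ vanishes where $|\nabla\phi|\ge(1-\theta)q_{cr}(\psi)$ and $q_{cr}(\psi)$ is bounded. With these repairs the argument closes, but as written the reflection step is where the proof would fail.
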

\begin{proof}
	We first remark that a smooth solution $U$ to
	\begin{eqnarray*}
		\begin{cases}
			\mbox{div}\left[\tilde\rho\left(|\nabla U|^2, \psi\right)\nabla U\right]=0, \quad &\mbox{in} ~ \Omega',\\
			U=\phi, \quad &\mbox{on} ~|x|=R,\\
			\frac{\partial U}{\partial \nu}=0, \quad &\mbox{on} ~\Gamma,
		\end{cases}
	\end{eqnarray*}
	would be the same as $\varphi$ if such a solution does exist.
	
	Indeed, both $U$ and $\varphi$ satisfy weakly Euler-Lagrangian equation if
	$$
	\delta \int_{B_k\cap \Omega}G\left(|Dw|^2, \psi\right)dx =0
	$$
	subject to the boundary constraint $w=\varphi$ on $|x|=R$.
	
	Hence, both are critical points of a uniformly strict convex functioned which has only one critical points as minimizer.
	For the existence of $U$, can be found in \cite[Chapter 10]{LU_68}, it is based on the a prior estimate of $U$, see \cite{Gilbarg-Trudinger}.
\end{proof}

\begin{lemma}\label{lem-5}
	There is the continuity estimate of $\nabla\phi$ at infinity:
	\begin{equation}\label{5.2}
	\left|\nabla\phi(x)-\left( q_{\infty}, 0,  \cdots , 0\right)\right|\leq
	\frac{C}{(1+|x|)^{\beta'}},
	\end{equation}
	where $\beta'=\min\{\frac{n}{2}, \beta+\frac{n}{q}-1\}$.
\end{lemma}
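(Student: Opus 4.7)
The plan is to run a dyadic scaling argument on balls of radius comparable to $|x|$ far from $\Gamma$, and apply the interior sup-norm estimate in Proposition \ref{prop-modify} to each component of $\nabla\varphi$, where $\varphi = \phi - q_\infty x_1$. The key inputs are the global energy bound $\|\nabla\varphi\|_{L^2(\Omega)}\leq C$ from Theorem \ref{th3} and the weighted integrability $|x|^\beta\nabla\psi\in L^q(\Omega)$.

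First, fix $k\in\{1,\dots,n\}$ and formally differentiate \eqref{equ-modify1} in the $x_k$ direction (justifiable by difference quotients as in the proof of Lemma \ref{lem-3}) to obtain a divergence-form equation for $w := \partial_k\phi - q_\infty\delta_{k1} = \partial_k\varphi$:
\begin{equation*}
\sum_{i,j=1}^n\partial_i\bigl(\tilde{a}_{ij}\,\partial_j w\bigr) + \sum_{i=1}^n\partial_i\bigl(\tilde{\rho}_w\,\partial_k\psi\,\partial_i\phi\bigr) = 0.
\end{equation*}
By \eqref{cutoffellpitic} the coefficients $\tilde{a}_{ij}$ are uniformly elliptic, and by Lemma \ref{lem-3} the source $f_i^{(k)}:=\tilde{\rho}_w\partial_k\psi\,\partial_i\phi$ satisfies $|f_i^{(k)}|\leq C|\nabla\psi|$. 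This puts $w$ into the exact form required by Proposition \ref{prop-modify}.

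For $|x_0|$ sufficiently large, set $R_0=|x_0|/4$, so that $B_{R_0}(x_0)\subset\Omega$ and $|x|\sim R_0$ on this ball. Rescale by $w_{R_0}(y):=w(x_0+R_0 y)$ on $B_1$; then $w_{R_0}$ satisfies a uniformly elliptic equation of the same form, with rescaled source $R_0 f_i^{(k)}(x_0+R_0 y)$. Proposition \ref{prop-modify} then yields
\begin{equation*}
|w(x_0)|=|w_{R_0}(0)|\leq C\Bigl(\|w_{R_0}\|_{L^2(B_1)}+\|R_0 f^{(k)}(x_0+R_0\cdot)\|_{L^q(B_1)}\Bigr).
\end{equation*}
The $L^2$ term is bounded by $CR_0^{-n/2}$ purely from the change of variables together with $\|w\|_{L^2(\Omega)}\leq\|\nabla\varphi\|_{L^2(\Omega)}\leq C$, which contributes the exponent $n/2$. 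The $L^q$ source term is controlled by $CR_0^{1-n/q-\beta}$ via the weighted assumption $|x|^\beta|\nabla\psi|\in L^q(\Omega)$ and the lower bound $|x|\geq 3R_0$ on $B_{R_0}(x_0)$, which contributes the exponent $\beta+n/q-1$.

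Combining the two estimates gives $|w(x_0)|\leq C|x_0|^{-\min\{n/2,\,\beta+n/q-1\}}=C|x_0|^{-\beta'}$ for large $|x_0|$, and the bound for bounded $|x_0|$ reduces to the uniform estimate already supplied by Lemmas \ref{lem-3} and \ref{lem-4}. The main obstacle is mostly bookkeeping: rigorously justifying the formal differentiation (standard difference-quotient approximation) and verifying that the uniform ellipticity constants and the hypotheses of Proposition \ref{prop-modify} for the rescaled equation are independent of $R_0$, so that the constant $C$ on the right is uniform. The two competing exponents have distinct origins, the rate $n/2$ coming from the finite Dirichlet energy alone, while $\beta+n/q-1$ reflects the prescribed decay of the forcing $\nabla\psi$.
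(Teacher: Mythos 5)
Your proposal is correct and is essentially the paper's own argument: the paper likewise differentiates the equation, invokes the energy bound from Theorem \ref{th3} and the weighted condition $|x|^\beta\nabla\psi\in L^q$, rescales to unit scale (there on annuli $\{R/2<|x|<2R\}$ with the weight $R^{\beta'}$ absorbed into the rescaled unknown, rather than on balls $B_{R_0}(x_0)$ with the powers of $R_0$ tracked afterwards), and applies Proposition \ref{prop-modify}, obtaining the same two competing exponents $n/2$ and $\beta+\frac{n}{q}-1$ from the same two sources. The differences are purely cosmetic.
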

\begin{proof}
	Since $\phi=\varphi+ q_{\infty} x_1$. 
	Let
	$\phi'=\partial_k\phi$, $\varphi'=\partial_k\varphi$ for $k=1,\dots,n$. Then $\phi'$ satisfies
	$$
	\sum_{i, j =1}^n\partial_i\left(\tilde{a}_{ij}\partial_j\phi'\right)+\sum_{i=1}^n\partial_i\left(\tilde{\rho}_w
	\partial_i\phi\partial_k\psi\right)=0.
	$$
	So does $\varphi'$.
	\begin{equation}\label{5.3}
	\sum_{i, j=1}^n\partial_i\left(\tilde{a}_{ij}\partial_j\varphi'\right)+\sum_{i=1}^n\partial_i\left[\tilde{\rho}_w
	\partial_i\phi\partial_k\psi\right]=0.
	\end{equation}
	By \eqref{ff} in Theorem \ref{th3},
	\begin{equation}\label{5.4}
	\int_ \Omega|\varphi'|^2 dx\leq\int_\Omega|\nabla\varphi|^2dx\leq C,
	\end{equation}
	and \eqref{5.3} will be shown to be enough for Lemma \ref{lem-5}.
	
	For sufficiently large $R$, $\left\{\frac{R}{2}<|x|<2R \right\} \subset \Omega$. Define
	\begin{equation}\label{5.5}
	w(y) =R^{\beta'} \varphi'(Ry)
	\end{equation}
	on $\left\{\frac{1}{2}<|y|<2\right\}$. By \eqref{5.3} and \eqref{5.4}, $w(y)$ satisfies
	$$
	\sum_{i,j=1}^n\frac{\partial}{\partial y_i}\left(\tilde{a}_{ij}\frac{\partial w}{\partial y_j}\right) +\sum_{i=1}^n
	R^{\beta'+1}\frac{\partial}{\partial y_i}\left(\tilde{\rho}_w 	\partial_i\phi\partial_k\psi\right)=0, \quad
	\mbox{in} ~ \left\{\frac{1}{2}<|y|<2 \right\},
	$$
	$$
	\int_{\left\{\frac{1}{2}<|y|<2 \right\}}|w(y)|^2 dy \leq C, ~ ~ \mbox{ and } ~ ~ \int_{\left\{\frac{1}{2}<|y|<2 \right\}}|R^{\beta'+1}\nabla_x\psi(y)|^q dy \leq C,
	$$
	where the argument of $\tilde{a}_{ij}$ is $Ry$ if the $a_{ij}$ are taken as functions of $x$.  The last inequality is due to the second condition for $\psi$ in \eqref{psicondition1} and $\beta'+1\leq \beta+\frac{n}{q}$. Applying Proposition \ref{prop-modify}, we have
	$$
	|w(y)|\leq C \quad \mbox{for} ~|y|=1.
	$$
	Going back to \eqref{5.5}, we have for sufficiently large $|x|$, for $i=1, \cdots, n$,
	$$
	\left|\partial_i \varphi(x)\right|\leq \frac{C}{|x|^{\beta'}},
	$$
	which leads to \eqref{5.2}, combining with the result of Lemma \ref{lem-4}.
\end{proof}

The local $C^{2, \alpha_0}$ H\"{o}lder estimate on $\phi$ can be obtained through $(\ref{equ-modify1})$ by the standard elliptic estimate, while $\psi\in C^{\alpha'}$ since \eqref{psicondition} and \eqref{psicondition1}.


Now, we settle the modified problem \textbf{Problem 3 ($q_\infty$)}.

\section{subsonic flow and subsonic-sonic flow in space}
In this section, we will complete the proof Theorem \ref{mainthm}. The first step is to show the uniqueness of the modified flow. Then, we can release the cut-off base on the Bers skill and complete the proof of the subsonic part of main theorem. Then, we will take the subsonic-sonic limit by the compactness theorem in \cite{Chen-Huang-Wang}.

\begin{theorem}\label{6.1}
	For every $q_\infty$, there is a unique classical solution such that
	$$
	\phi=\varphi+q_\infty x_1 \quad \mbox{with} ~ \varphi\in \mathcal{V}.
	$$
	Furthermore, the velocity field $\nabla\phi$ depends on $q_\infty$ continuously and in particular $\max_\Omega|\nabla\phi|$ is a continuous function of $q_\infty$.
\end{theorem}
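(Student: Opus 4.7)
First I would establish existence and uniqueness of the classical solution by assembling what has already been proved. Theorem \ref{th3} supplies a unique minimizer $\bar\varphi\in\mathcal{V}$ of the strictly convex functional $I(\cdot,q_\infty)$, and this minimizer is a weak solution of the modified divergence-form equation $(\ref{equ-modify})_1$. Since the cut-off density $\tilde\rho$ makes \eqref{equ-modify1} uniformly elliptic, Lemmas \ref{lem-3}, \ref{lem-4}, and \ref{lem-5} yield the $C^{1,\alpha}$ regularity and the decay at infinity; a Schauder bootstrap in \eqref{equ-modify1}, using $\psi\in C^{\alpha'}$ inherited from \eqref{psicondition1}, then lifts $\bar\varphi$ to a $C^{2,\alpha_0}$ classical solution $\phi=\bar\varphi+q_\infty x_1$. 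Uniqueness among classical solutions of the form $\varphi+q_\infty x_1$ with $\varphi\in\mathcal{V}$ is automatic: any such solution satisfies the Euler--Lagrange equation of $I(\cdot,q_\infty)$ weakly, hence by strict convexity must coincide with $\bar\varphi$.

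To prove continuous dependence I would fix a compact subinterval $[a,b]\subset\R^+$ and let $q_\infty^k\to q_\infty$ inside it; denote the associated solutions by $\phi^k=\bar\varphi^k+q_\infty^k x_1$. The first step is to make the bound \eqref{ff} uniform in $q_\infty\in[a,b]$, which is straightforward once one observes that all constants in the proof of Theorem \ref{th3} depend only on $\sup_{[a,b]}|G_v((q_\infty)^2,\psi)|$ and $\sup_{[a,b]}|G_{vw}((q_\infty)^2,\psi)|$, both bounded by the cut-off construction. Propagating this uniform $\mathcal{V}$-bound through Lemmas \ref{lem-3}--\ref{lem-5} then yields uniform interior and boundary $C^{1,\alpha}$ estimates on $\nabla\phi^k$ together with the uniform decay $|\nabla\phi^k-(q_\infty^k,0,\dots,0)|\leq C(1+|x|)^{-\beta'}$.

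Next I would invoke Arzel\`a--Ascoli with a diagonal extraction to produce a subsequence for which $\nabla\phi^k\to\nabla\phi^{\ast}$ in $C^0_{\text{loc}}(\overline\Omega)$. Because $\tilde\rho(\cdot,\psi)$ is continuous, this convergence is enough to pass to the limit in the weak formulation of \eqref{equ-modify}; combined with the uniform decay at infinity it also delivers the correct limit $(q_\infty,0,\dots,0)$ for $\nabla\phi^\ast$. Hence $\phi^\ast=\varphi^\ast+q_\infty x_1$ solves \textbf{Problem 3 $(q_\infty)$}, and the uniqueness established in the first paragraph forces $\phi^\ast=\phi$, the solution at the limit parameter. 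A standard subsequence-of-a-subsequence argument then shows that the whole sequence $\nabla\phi^k$ converges to $\nabla\phi$ in $C^0_{\text{loc}}(\overline\Omega)$.

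Finally, for continuity of $\max_\Omega|\nabla\phi|$ I would use the uniform decay above to extract $R_0=R_0(a,b,\psi,\Omega)$ with $|\nabla\phi^k(x)|\leq b+1$ for $|x|\geq R_0$, so that the supremum is attained on the compact set $\overline\Omega\cap\overline{B_{R_0}}$, on which the convergence $\nabla\phi^k\to\nabla\phi$ is uniform. The principal technical obstacle I anticipate is not deep but tedious: one must verify that every estimate in Sections 4 and 5 can be made uniform as $q_\infty$ varies in $[a,b]$, since the reference state $q_\infty e_1$ permeates every linearisation around the cut-off density, and the coercivity constant $C_1$ in \eqref{ineq-phi} depends on it through $\partial_{pp}^2 F(q_\infty e_1)$. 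Once this bookkeeping is carried out, the remainder of the proof is a textbook compactness-plus-uniqueness passage.
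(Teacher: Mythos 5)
Your existence and uniqueness arguments coincide with the paper's: existence from Theorem \ref{th3} plus the regularity of Lemmas \ref{lem-3}--\ref{lem-5}, and uniqueness from the fact that any classical solution with $\varphi\in\mathcal{V}$ is a critical point of the uniformly convex functional $I(\cdot,q_\infty)$ (the paper makes this quantitative by pairing the difference of Fr\'{e}chet derivatives with $\varphi_1-\varphi_2$ and using the positivity of $\partial^2_{pp}F$). For the continuous dependence, however, you take a genuinely different route. The paper stays inside the variational framework: using the continuity of $I$ in $q_\infty$ (via \eqref{4.8}) and the minimality of each $\varphi^m$, it shows that $\{\varphi^m\}$ is a minimizing sequence for $I(\cdot,\bar q_\infty)$, and then invokes the uniform convexity of $B$ to conclude $\varphi^m\to\bar\varphi$ strongly in $\mathcal{V}$ before upgrading to uniform convergence of gradients by Arzel\`a--Ascoli and a contradiction argument. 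You instead run a compactness-plus-uniqueness scheme: uniform $C^{1,\alpha}$ and decay estimates, diagonal extraction, passage to the limit in the weak formulation, identification of the limit by uniqueness, and a subsequence-of-subsequences step. Both are valid and rest on the same uniform estimates; the paper's argument buys strong $\mathcal{V}$-convergence directly from the functional without having to pass to the limit in the nonlinear PDE, while yours avoids the continuity analysis of $I$ in $q_\infty$ at the cost of the limit-identification step. Two small points you should make explicit to close your version: (i) to apply uniqueness you need $\varphi^\ast\in\mathcal{V}$, which follows from the uniform bound \eqref{ff} and weak compactness in the Hilbert space $\mathcal{V}$ together with the local identification of the limit; (ii) your claim that the supremum of $|\nabla\phi^k|$ is ``attained'' on $\overline\Omega\cap\overline{B_{R_0}}$ is not literally correct, since $|\nabla\phi^k|\to q_\infty^k$ at infinity and the tail supremum can compete with the compact part; the decay estimate \eqref{5.2} does control the tail to within $C R_0^{-\beta'}$ of $q_\infty^k$, so an $\varepsilon$-argument in $R_0$ still delivers the continuity of $\max_\Omega|\nabla\phi|$.
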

\begin{proof}	
	The existence follows from the existence of the variational problem in Theorem \ref{th3} and the regularity estimates in Lemma \ref{lem-3} and \ref{lem-4} and \ref{lem-5}.
	
	To prove the uniqueness, we note that two classical solutions
	$$
	\phi_i=\varphi_i+q_\infty x_1, \quad i=1, 2 \quad \mbox{with} ~\varphi_i\in \mathcal{V}
	$$
	would be both critical points of $I\left(\varphi, q_\infty\right)$ as we defined. We denote $I'_v$ is the Fr\'{e}chet derivative, then we have
	\begin{align*}
					0=&\left(I'_v\left(\varphi_1,q_\infty\right)
			-I'_v\left(\varphi_2,q_\infty\right), \varphi_1-\varphi_2\right)\\
			=&\int_\Omega \left[\partial_pF\left(\nabla\varphi_1+q_\infty e_1\right)
			-\partial_pF\left(\nabla\varphi_2+q_\infty \right)\right] (\nabla\varphi_1-\nabla\varphi_2)dx\\
			=& \int_\Omega \int_0^1 \partial_{p_ip_j}^2F\left(t\nabla\varphi_1+(1-t)\nabla\varphi_2 + q_\infty e_1\right)\partial_i(\varphi_1-\varphi_2)\partial_j(\varphi_1-\varphi_2) d t d x\\
			\geq& C \|\varphi_1-\varphi_2\|_{\mathcal{V}}^2.
			\end{align*}
	Hence $\varphi_1=\varphi_2$.
	
	Now we prove the continuous dependence of solutions on $q_\infty$.
	
	Let $q_\infty^m$ be a convergent sequence, $q_\infty^m\rightarrow \bar{q}_\infty$. Denote $\phi^m=\varphi^m+q_\infty^m x_1$, $m\in\mathbb{N}$	as the solution sequence. First we show that $\varphi_m\rightarrow \bar{\varphi}$ in $\mathcal{V}$ by using that $\phi^m$ is a minimizing sequence of $I\left(\phi, \bar{q}_\infty\right)$.
	
	In fact, since $q_\infty^m\rightarrow \bar{q}_\infty$ as $m\rightarrow\infty$, all estimates in Lemma \ref{lem-3}, \ref{lem-4}  and \ref{lem-5} as well as \eqref{ff} in Theorem \ref{th3} can be taken uniformly.
	
	In particular, $\int_\Omega|\nabla\varphi^m|^2dx$ and $\max_\Omega |\nabla\varphi^m|$ are uniformly bounded.
	
	For any given $\delta>0$, using \eqref{4.8} we obtain for sufficiently large $m$,
	$$
	\left|I\left(\varphi^m, q^m_\infty\right)-I\left(\varphi_m, \bar{q}_\infty\right)\right|<\delta,
	$$
	$$
	\left|I\left(\bar{\varphi}, q^m_\infty\right)-I\left(\bar{\varphi}, \bar{q}_\infty\right)\right|<\delta,
	$$
	combined with the minimality of $\phi^m$ for $I\left(\phi, q^m_\infty\right)$, we have
	\begin{equation*}
			I\left(\varphi^m, \bar{q}_\infty\right)\leq I\left(\varphi^m, q^m_\infty\right)+\delta
			\leq I\left(\bar{\varphi}, q^m_\infty\right)+\delta
			\leq I\left(\bar{\varphi}, \bar{q}_\infty\right)+2\delta.
	\end{equation*}
	Therefore, $\varphi^m$ is a minimizing sequence for $I\left(\varphi, \bar{q}_\infty\right)$. By the proof of Theorem \ref{th3}, $\varphi_m\rightarrow \bar{\varphi}$ in $\mathcal{V}$.	The uniform convergence of $\nabla\phi^m$ to $\nabla\bar{\phi}$ follows from those uniform estimates in Lemma \ref{lem-3} , \ref{lem-4} and \ref{lem-5} by employing Arzela--Ascoli  theorem and a contradiction argument.
		
	Then, we conclude that $\max_{x\in\Omega}|\nabla \phi^m|\rightarrow \max_{x\in\Omega} |\nabla \bar{\phi}|$, hence $\max_{x\in\Omega} |\nabla\phi|$ is a continuous function of
	$q_\infty$.
\end{proof}	
	
\textbf{Proof of Theorem \ref{mainthm}}:	

First, we will prove the part (1): the subsonic case.

Up to now, we have shown for fixed cut off parameter $\theta$, there exists an unique solution of \textbf{Problem 3 ($q_\infty$)}, which is denoted as $\phi(x; q_\infty, \theta)$. For remove the cut off, which is introduce in Section 3, we define the quantity:
$$
\mathcal{M}(q_\infty, \theta)=\max_{x\in\Omega}\left(\frac{|\nabla\phi(x; q_\infty, \theta)|}{q_{cr}(\psi)(x)}\right)
$$
which is equivalence of maximum Mach number of the field. It is noticeable that for certain $\theta$, if $\mathcal{M}(q_\infty, \theta)<1-2\theta$, $\phi(x; q_\infty, \theta)$ is the unique solution of \textbf{Problem 2 ($q_\infty$)}. By the similar argument in Theorem \ref{6.1}, one can show that $\mathcal{M}(q_\infty, \theta)$ also depends on $q_\infty$ continuously.

Let $\{\theta_i\}_{n=1}^{\infty}$ be a strictly decreasing sequence of positive numbers, such that $\delta_i\rightarrow0$ as $i\rightarrow\infty$. For fixed $i$, there exists a maximum interval $[0, q_\infty^i)$ such that, for $q_\infty\in[0, q_\infty^i)$,
$$
\mathcal{M}(q_\infty, \theta)<1-2\theta_i.
$$
Then, for $q_\infty\in[0, q_\infty^i)$, $\nabla \phi(x; q_\infty, \theta_i)$ is the solution of \textbf{Problem 1 ($q_\infty$)}. From the uniqueness of \textbf{Problem 3 ($q_\infty$)}, we can see $q_\infty^i\le q_\infty^j$ for $i<j$. So, $\{q_\infty^i\}_{i=1}^{\infty}$ is an increasing sequence with the upper bounded $(q_\infty)_{max}:=c(1)$, which implies the convergence of the sequence. As a consequence, we can have
$$\hat{q}:=\lim_{i\rightarrow\infty}q_\infty^i.$$
If $q_\infty^i<\hat{q}$ for any $i$, then for any $q_\infty\in [0, \hat{q})$, there exist an index $i$ such that $q_\infty\le q_\infty^i$. The solution of \textbf{Problem 1 ($q_\infty$)} is $\nabla\phi(x; q_\infty, \theta_i)$, which could be written as $\nabla\phi(x; q_\infty)$.

Then, we have $M(\nabla\phi(x; q_\infty); \psi)\rightarrow 1$ as $q_\infty\rightarrow \hat{q}$. It means the subsonic flows will become subsonic-sonic flows.

The uniqueness of \textbf{Problem 1 ($q_\infty$)} is already contained in Theorem \ref{6.1}.

Next, we will prove the part (2): the subsonic-sonic case.

The strong solutions $u^\varepsilon$ satisfy \eqref{p1equation}, and the Bernoulli's law \eqref{Brelation} and are uniform subsonic solutions of \textbf{Problem 1 ($q_\infty^\varepsilon$)}. Hence,
Theorem 2.2 in \cite{Chen-Huang-Wang} immediately implies the strong convergence of $u^\varepsilon$  in
$\Omega$. As a consequence, the density function $\rho^\varepsilon(x)$, which defined by \eqref{equ-rho}, is convergence to $\bar{\rho}(x)$. The boundary conditions are satisfied for $\bar{\rho} \bar{u}$
in the sense of Chen-Frid \cite{Chen7}. On the other hand, Since
$(\ref{OrE})_2$ holds for the sequence of subsonic solutions
$\rho^{\varepsilon}(x)$ and $u^{\varepsilon}(x)$, it is straightforward to see that $\bar{\rho}$ and $\bar{u}$ also
satisfies $(\ref{OrE})_2$ in the sense of
distributions. This completes the proof of Theorem \ref{mainthm}.

 {\bf Acknowledgements:}
The research of Tian-Yi Wang was supported in part
by NSF of China under Grant 11371064.


\begin{thebibliography}{99}
	
	
		\bibitem{Bers1}
		\newblock L. Bers,
		\newblock \emph{An existence theorem in two-dimensional gas dynamics,}
		\newblock Proc. Symposia Appl. Math., \textbf{1} (1949) 41--46.
		
		\bibitem{Bers2}
		\newblock L. Bers,
		\newblock \emph{Boundary value problems for minimal surfaces with singularities at infinity,}
		\newblock Trans. Amer. Math. Soc., \textbf{70} (1951) 465--491.
		
		\bibitem{Bers3}
		\newblock L. Bers,
		\newblock \emph{ Existence and uniqueness of a subsonic flow past a given profile.}
		\newblock Comm. Pure Appl. Math., \textbf{7} (1954) 441--504.
		
		\bibitem{Chen-Du-Xie-Xin}
		\newblock C. Chen, L. Du, C. Xie, and Z.P. Xin,
		\newblock \emph{Two Dimensional Subsonic Euler Flow Past a Wall or a Symmetric Body, }
		\newblock preprint, 2014, arXiv: 1410.1991.
		
		\bibitem{Chen6}
		\newblock G.-Q. Chen, C. M. Dafermos, M. Slemrod, and D.-H. Wang,
		\newblock \emph{On two-dimensional sonic-subsonic flow,}
		\newblock  Commun. Math. Phys., \textbf{271} (2007), 635--647.
		
		\bibitem{Chen7}
		\newblock G.-Q. Chen,  H. Frid,
		\newblock \emph{Divergence-measure fields and hyperbolic conservation laws,}
		\newblock  Arch. Rational. Mech. Anal., \textbf{147} (1999), 89--118.
		
		\bibitem{Chen-Huang-Wang}
		\newblock G.-Q. Chen, F.-M. Huang, and T.-Y. Wang,
		\newblock \emph{Subsonic-sonic limit of
		approximate solutions to multidimensional steady Euler equations,}
		\newblock Arch, Rational Mech. Anal., 2015 (to appear); arXiv:1311.3985.
	
		\bibitem{Courant-Friedrichs}
		\newblock R. Courant and K.O. Friedrichs,
		\newblock {\it Supersonic Flow and Shock Waves},
		\newblock Interscience Publishers Inc.: New York, 1948.
		
		\bibitem{Dong1}
		\newblock G.-C. Dong,
		\newblock \emph{Nonlinear partial differential equations of second
			order.}
		\newblock  American Mathematical Society, Providence, RI, 1991.
		
		
		\bibitem{Dong2}
		\newblock G.-C. Dong, and B. Ou,
		\newblock \emph{Subsonic flows around a body in space,}
		\newblock Comm. Partial Differential Equations, \textbf{18} (1993) 355--379.
		
		\bibitem{Du-Yan-Xin}
		\newblock L.Du, Z.P. Xin and W. Yan,
		\newblock \emph{Subsonic Flows in a Multi-Dimensional Nozzle,}
		\newblock Arch, Rational Mech. Anal., \textbf{201} (2011), 965--1012.
		
		
		\bibitem{Finn1}
		\newblock R. Finn, and D. Gilbarg,
		\newblock \emph{Asymptotic behavior and uniqueness of plane subsonic flows,}
		\newblock Comm. Pure Appl. Math., \textbf{10} (1957), 23--63.
		
		\bibitem{Gilbarg1}
		\newblock R. Finn, and D. Gilbarg,
		\newblock \emph{Three-dimensional subsonic flows and asymptotic estimates for elliptic partial differential equations,}
		\newblock Acta Math., \textbf{98} (1957) 265--296.
		
		\bibitem{Gilbarg-Trudinger}
		\newblock D. Gilbarg, and N. Trudinger
		\newblock \emph{Elliptic partial differential equations of second order}
		\newblock Springer-Verlag, New York,1983, second edition.

		
		\bibitem{Huang-Wang-Wang} F.-M. Huang, T.-Y. Wang, and Y. Wang,
		\newblock \emph{On multidimensional sonic-subsonic flow,}
		\newblock Acta Math. Sci. Ser. B, \textbf{31} (2011), 2131--2140.
		
		\bibitem{Liu-Yian}
		\newblock L. Liu and H. Yuan,
		\newblock \emph{Steady subsonic potential flows through infinite multi-dimensional largely-open nozzles,}
		\newblock  Calc. Var., \textbf{49} (2014) 1--36.
		
		\bibitem{LU_68}
		\newblock O.A. Ladyzhenskaya, and N.N. Ural'tseva,
		\newblock \emph{Linear and quasilinear elliptic equations,}
		\newblock  Academic Press, New York, 1968.

		
		\bibitem{ou2}
		\newblock G. Lu, and B. Ou,
		\newblock \emph{A Poincar\'{e} Inequality on $R^n$ and Its Application to Potential Fluid Flows in Space,}
		\newblock Comm. Appl. Nonlinear Anal., \textbf{12(1)} (2005) 1--24.
		
		\bibitem{Frankl}
		\newblock F. Frankl, and M.  Keldysh,
		\newblock \emph{Die ussere neumannshe aufgabe fr nichtlineare elliptische differentialgleichungen mit anwendung auf die theorie der flugel im kompressiblen gas.}
		\newblock Bull. Acad. Sci., \textbf{12} (1934) 561--687.
		
		\bibitem{ou1}
		\newblock B. Ou,
		\newblock \emph{An irrotational and incompressible flow around a body in space,}
		\newblock J. of PDEs, \textbf{7(2)} (1994) 160--170.
		
		\bibitem{Payne}
		\newblock L. E. Payne, and H. F. Weinberger,
		\newblock \emph{Note on a lemma of Finn and Gilbarg,}
		\newblock Acta Math., \textbf{98} (1957) 297--299.
		
		
		\bibitem{Shiffman1}
		\newblock M. Shiffman,
		\newblock \emph{On the existence of subsonic flows of a compressible fluid,}
		\newblock Proc. Nat. Acad. Sci. U.S.A., \textbf{38}	(1952) 434--438.
		
		\bibitem{Shiffman2}
		\newblock M. Shiffman,
		\newblock \emph{On the existence of subsonic flows of a compressible fluid,}
		\newblock   J. Rational Mech. Anal., \textbf{1} (1952) 605--652.
		
		\bibitem{Xin1}
		\newblock C. Xie and Z. Xin,
		\newblock \emph{Global subsonic and subsonic-sonic flows through infinitely long nozzles,}
		\newblock Indiana Univ. Math. J., \textbf{56} (2007), 2991--3023.
		
		\bibitem{Xin2}
		\newblock C. Xie and Z. Xin,
		\newblock \emph{Global subsonic and subsonic-sonic flows through infinitely long axially symmetric nozzles,}
		\newblock J. Diff. Eqs., {\bf 248} (2010), 2657--2683.
		
\end{thebibliography}
\end{document}